\documentclass[a4paper,12pt]{article}
\usepackage[text={6.5in,8.4in},centering]{geometry}
\setlength{\topmargin}{-0.25in}

\usepackage{graphicx,url,subfig}
\RequirePackage[OT1]{fontenc}
\RequirePackage{amsthm,amsmath,amssymb,amscd}
\RequirePackage[numbers]{natbib}
\RequirePackage[colorlinks,citecolor=blue,urlcolor=blue]{hyperref}
\usepackage{enumerate}
\usepackage{datetime}
\usepackage{etex}
\DeclareMathOperator{\sign}{sign}

\makeatother
\numberwithin{equation}{section}
\allowdisplaybreaks


\theoremstyle{plain}
\newtheorem{theorem}{Theorem}[section]
\newtheorem{corollary}[theorem]{Corollary}
\newtheorem{lemma}[theorem]{Lemma}

\theoremstyle{definition}

\newtheorem{remark}[theorem]{Remark}

\newtheorem{example}[theorem]{Example}

\newcommand{\B}{\textbf{B}}
\newcommand{\E}{\mathbb{E}}
\newcommand{\D}{\mathbb{D}}

\newcommand{\ud}{\ensuremath{\mathrm{d}}}

\newcommand{\Indt}[1]{1_{\left\{#1 \right\}}}

\newcommand{\Norm}[1]{\left|\left|  #1   \right|\right|}

\newcommand{\InPrd}[1]{\left\langle #1 \right\rangle}

\newcommand{\calF}{\mathcal{F}}

\newcommand{\calK}{\mathcal{K}}

\newcommand{\calI}{\mathcal{I}}
\newcommand{\calL}{\mathcal{L}}
\newcommand{\calM}{\mathcal{M}}

\newcommand{\R}{\mathbb{R}}

\newcommand{\Erf}{\ensuremath{\mathrm{erf}}}
\newcommand{\Erfc}{\ensuremath{\mathrm{erfc}}}

\DeclareMathOperator{\Lip}{\mathit{L}}

\DeclareMathOperator{\lip}{\mathit{l}}



\title{Two-point correlation function and Feynman-Kac formula for the stochastic heat equation}

%
\author{
{\bf Le Chen\footnote{
Research partially supported by a fellowship from Swiss National Science Foundation (P2ELP2\_151796),
\url{chenle02@gmail.com} or \url{chenle@ku.edu}.}}
\, \,   {\bf Yaozhong Hu}\footnote{Research
partially supported by a grant from the Simons Foundation
\#209206, \url{yhu@ku.edu}.}
\, \,   {\bf David Nualart}\footnote{Research partially supported by the NSF grant  DMS1512891 and the ARO grant FED0070445,
\url{nualart@ku.edu}.}
\\[1em]
Department of Mathematics\\
University of Kansas\\
Lawrence, Kansas, 66045, USA
\date{}
}


\begin{document}
\maketitle
\begin{center}
\begin{minipage}[rct]{5 in}
\footnotesize \textbf{Abstract:}
In this paper, we obtain an explicit formula for the two-point correlation function
for the solutions to the stochastic heat equation on $\R$.
The bounds for $p$-th moments proved in \cite{ChenDalang13Heat} are simplified.
We validate the Feynman-Kac formula for the $p$-point correlation function
of the solutions to this equation with measure-valued initial data.

\vspace{2ex}
\textbf{MSC 2010 subject classifications:}
Primary 60H15. Secondary 60G60, 35R60.

\vspace{2ex}
\textbf{Keywords:}
Stochastic heat equation, two-point correlation function,
Feynman-Kac formula, Brownian local time,  Malliavin calculus.
\vspace{4ex}
\end{minipage}
\end{center}

\setlength{\parindent}{1.5em}



\section{Introduction}
Consider the following stochastic heat equation
\begin{align}\label{EH:Heat}
\begin{cases}
\displaystyle
\left(\frac{\partial }{\partial t} - \frac{\nu}{2}
\frac{\partial^2 }{\partial x^2}\right) u(t,x) =  \rho(u(t,x))
\:\dot{W}(t,x),&
x\in \R,\; t >0, \\
\quad u(0,\cdot) = \mu(\cdot)\;,
\end{cases}
\end{align}
where $\dot{W}$ is a space-time white noise and $\rho: \R \to \R$ is a
globally Lipschitz function. The initial data $\mu$ is a signed Borel measure,
which we assume belongs to the set
\[
   \calM_H(\R) := \left\{\text{signed Borel measures $\mu$, s.t.}\:
\int_\R e^{-a x^2} |\mu|(\ud x)<+\infty,\: \text{for all $a>0$}\right\}.
\]
In the above, we denote $|\mu|:= \mu_+ + \mu_-$,  where $\mu=\mu_+-\mu_-$ and
$\mu_\pm$ are the two non-negative Borel measures with disjoint support that
provide the Jordan decomposition of $\mu$.
The set $\calM_H(\R)$ can be equivalently characterized by the condition that
\begin{align}\label{EH:J0finite}
\left(|\mu| * G_\nu(t,\cdot)\right)
(x) = \int_\R G_\nu(t,x-y) |\mu|(\ud y)<+\infty\;, \quad \text{for all $t>0$
and $x\in\R$},
\end{align}
where $*$ denotes the convolution in the space variable and $G_\nu(t,x)$ is
the one-dimensional heat kernel function
\begin{align*}
G_\nu(t,x) := \frac{1}{\sqrt{2\pi \nu t}} \exp\left\{-\frac{x^2}{2\nu
t}\right\},\quad t>0,\: x\in\R\:.
\end{align*}
The initial condition $u(0, \cdot)=\mu(\cdot) $ is understood 
as $\lim_{t\downarrow 0}  u(t,\cdot)=\mu(\cdot)$  in the sense of distribution
(we identify a measure as a   distribution in the usual sense; see \cite[Theorem 1.7]{ChenKim14Comparison}).

Denote
\[
J_0(t,x):= (\mu*G_\nu(t,\cdot))(x)=\int_\R G_\nu(t,x-y)\mu(\ud y).
\]
Define the kernel function
\begin{align}
\label{E:K}
 \calK(t,x)=\calK(t,x;\nu,\lambda)&:=G_{\frac{\nu}{2}}(t,x)\cdot 
\left(\frac{\lambda^2}{\sqrt{4\pi\nu t}}+\frac{\lambda^4}{2\nu}
\: e^{\frac{\lambda^4 t}{4\nu}}\Phi\left(\lambda^2
\sqrt{\frac{t}{2\nu}}\right)\right),
\end{align}
where $\Phi(x)=\int_{-\infty}^x (2\pi)^{-1/2}e^{-y^2/2}\ud y$.
Some functions related to $\Phi(x)$ are the error functions $\Erf(x)=
\frac{2}{\sqrt{\pi}}\int_0^x e^{-y^2}\ud y$ and $\Erfc(x)=1-\Erf(x)$. Note that
$\Phi(x)=\Erfc(-x/\sqrt{2})/2$.

When $\rho(u)=\lambda u$, the following moment formula is proved in \cite{ChenDalang13Heat}
\begin{align}\label{E:SecMom}
\E \left(u(t,x)^2\right)  = J_0^2(t,x) +
 (J_0^2\star \calK)(t,x),
\end{align}
where 
``$\star$'' denotes the convolution in both space and time
variables, that is,
\begin{equation}
 (J_0^2\star \calK)(t,x):=
 \int_0^t\ud s \int_\R \ud y
 \: J_0^2(s,y)\calK(t-s,x-y).
\end{equation}
As for the two-point correlation function, define
\begin{align*}
\calI(t,x_1,\tau, x_2;\nu,\lambda) :=&
\lambda^2\int_0^t \ud r\int_\R  \ud z
\left[
J_0^2(r,z)+\left(J_0^2(\cdot,\circ)\star
\calK(\cdot,\circ;\nu,\lambda)\right)(r,z)
\right]\\
& \qquad\qquad \times G_\nu(t-r,x_1-z)G_\nu(\tau-r,x_2-z).
\end{align*}
Then by \cite[(2.26)]{ChenDalang13Heat},
for all $\tau\ge t>0$ and $x_1, x_2\in\R$,
\begin{equation}
\label{E:TP}
 \E\left[u(t,x_1)u\left(\tau,x_2\right)\right] =
J_0(t,x_1)J_0\left(\tau,x_2\right)  + \calI(t,x_1,\tau,x_2;\nu,\lambda)\;.
\end{equation}

The first goal of this note is to simplify the moment formulas \eqref{E:SecMom} and \eqref{E:TP} for the case $t=\tau$.
Note that the terms $(J_0^2\star \calK)(t,x)$ and $\calI(\dots)$ involve  four and six integrals, respectively.
We will reduce these integrals into only two integrals:
two convolutions of the initial data with respect to a kernel function.

%

\bigskip
It is well known that if the initial data is a function, then
the moments of the solution to \eqref{EH:Heat} with $\rho(u)=\lambda u$ admit
a Feynman-Kac representation; see \cite{HuNualart09}.
Suppose that $\mu(\ud x)=u_0(x)\ud x$ where $u_0$ is a bounded measurable function.
This representation says that for all $x_i\in\R$, $i=1,\dots,n$,
\begin{align}\label{E:FKF}
 \E\left[\prod_{i=1}^n u(t,x_i)\right]
 =\E^{\B}\left[\prod_{i=1}^n u_0\left(x_i+B_t^{i}\right)\:
 \exp\left(\lambda^2 \sum_{1\le i<j\le n}\int_0^t\delta_{x_j-x_i}
 \left(B_s^i-B_s^j\right)\ud s\right)\right],
\end{align}
where $\{B^i_t,t\ge 0\}$, $i=1,\dots, n$, are i.i.d. standard Brownian motions on $\R$, and
the expectation is with respect to all these Brownian motions.

On  one hand, direct evaluation of this expectation when $n=2$ is not easy
since it involves the joint law of a standard Brownian motion $B_t$ and
its local time $L_t^a$ at an arbitrary level $a\in\R$.
To the best of our knowledge, we are not aware of any references for this joint law expect the case where $a=0$.
We will derive this joint distribution and then
give an alternative and more probabilistic proof of the formula for the two-point correlation function.

On the other hand,
when the initial condition is a measure the meaning of \eqref{E:FKF}   is not clear.
Another aim of this paper is to make sense of \eqref{E:FKF} for initial data in $\calM_H(\R)$ using Malliavin calculus. More precisely, we show that  $\prod_{i=1}^n u_0\left(x_i+B_t^{i}\right)$   belongs to the Meyer-Watanabe space   $\mathbb{D}^{-\alpha, p}$ of Wiener distributions  for any $p>1$ and $\alpha > n(1-1/p)$, and  the exponential factor in  (\ref{E:FKF})
\begin{equation}  \label{eq1}
Y_n:= \exp\left(\lambda^2 \sum_{1\le i<j\le n}\int_0^t\delta_{x_j-x_i}
 \left(B_s^i-B_s^j\right)\ud s\right)
 \end{equation}
  belongs to $\mathbb{D}^{\alpha, p}$  for any $p>1$ and $\alpha <\frac 12$.  Then, we can choosing $p$ such that $n(1-1/p)< \alpha <1/2$, we can write
\begin{equation}  \label{eq2}
  \E\left[\prod_{i=1}^n u(t,x_i)\right] =\left\langle \prod_{i=1}^n u_0\left(x_i+B_t^{i}\right), Y_n \right\rangle,
 \end{equation}
  where $\langle  \cdot,  \cdot\rangle$ denotes the duality between  $\mathbb{D}^{-\alpha, p}$ and $ \mathbb{D}^{\alpha, q}$,  if  $1/p+1/q=1$. 

\medskip
When $\rho(u)$ in \eqref{EH:Heat} is nonlinear but satisfies the global Lipschitz 
condition,  the explicit  formula for the moment  of the solution is impossible.
We obtain  an  upper bound for the $p$-moment  of the solution and a lower 
bound for the second moment.  The idea is to compare them with the ones in the  linear   case.

\bigskip
We first state our main results in Section \ref{Sec:Main}.
The result for the two-point correlation function, Theorem \ref{T:SecMom}, is proved in Section \ref{Sec:MainProof}.
In Section \ref{Sec:LocB}, the joint law of $(B_t,L_t^a)$, Theorem \ref{T:LocB}, is proved.
In Section \ref{Sec:FeynmanKac}, we give the alternative proof of our two-point correlation formula for function-valued initial data.
Finally, by proving Theorems \ref{T:mun} and \ref{T:fL} in Sections \ref{SS:mun} and \ref{SS:fL},
respectively, we make sense of \eqref{E:FKF} for measure-valued initial data.

\bigskip
Throughout of the paper, $\Norm{\cdot}_p$ denotes $L^p(\Omega)$ norm.

\section{Main Results}\label{Sec:Main}
\subsection{Formulas for two-point correlation function}
\begin{theorem}\label{T:SecMom}
Suppose that $\mu\in\calM_H(\R)$. If $\rho(u)=\lambda u$, then for all $t>0$ and $x_1,x_2\in\R$,
\begin{align}\label{E:SecTP}
\E\left[u(t,x_1)u(t,x_2)\right] &=
\iint_{\R^2}\mu(\ud z_1)\mu(\ud z_2)
\:\calK^*(t,x_1-z_1,x_2-z_2,x_1-x_2;\lambda),
\end{align}
or
\begin{align}\notag
 \E\left[u(t,x_1)u(t,x_2)\right] =& \quad J_0(t,x_1)J_0(t,x_2)\\
&+
\iint_{\R^2}\mu(\ud z_1)\mu(\ud z_2)
\:\calK^\dagger(t,x_1-z_1,x_2-z_2,x_1-x_2;\lambda),
\label{E:SecTP2}
\end{align}
where
\begin{equation}
\begin{aligned}
 \calK^\dagger(t,z_1,z_2,y;\lambda)=&
 \frac{\lambda^2}{2\nu} G_{\nu/2}\left(t,\frac{z_1+z_2}{2}\right)
 \exp\left(\frac{\lambda^2}{4\nu}\left[
\lambda^2 t-2(|y|+|y-(z_1-z_2)|)
\right]\right)\\ 
&\times
\Phi\left( \frac{\lambda^2 t -(|y|+|y-(z_1-z_2)|)}{\sqrt{2\nu t}}\right).
\end{aligned}
\end{equation}
and
\begin{align} \notag
\calK^*(t,z_1,z_2,y;\lambda) =&
 G_{\nu}(t,z_1)G_{\nu}(t,z_2) + \calK^\dagger(t,z_1,z_2,y;\lambda)\\ \notag
=&G_{\nu/2}\left(t,\frac{z_1+z_2}{2}\right)\Bigg[
G_{2\nu}(t,z_1-z_2) \\ \label{E:calKStar}
&+ \frac{\lambda^2}{2\nu} \exp\left(\frac{\lambda^2}{4\nu}\left[
\lambda^2 t-2(|y|+|y-(z_1-z_2)|)
\right]\right)\\ \notag
&\times
\Phi\left( \frac{\lambda^2 t -(|y|+|y-(z_1-z_2)|)}{\sqrt{2\nu t}}\right)
\Bigg].
\end{align}
In particular,
\begin{align}\label{E:Sec}
\Norm{u(t,x)}_2^2 &=
\iint_{\R^2}\mu(\ud z_1)\mu(\ud z_2)
\:\calK^*(t,x-z_1,x-z_2,0;\lambda).
\end{align}
\end{theorem}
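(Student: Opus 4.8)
The plan is to start from the Chen--Dalang representation \eqref{E:TP} with $\tau=t$, expand the quantity $J_0^2$ against the product measure $\mu\otimes\mu$, and collapse every space integral by two elementary Gaussian identities, so that only a convolution in the time variable survives; that convolution is then evaluated by a Laplace transform in $t$. Two facts drive the computation. The first is the Gaussian product identity
\[
G_\nu(t,x-a)\,G_\nu(t,x-b)=G_{\nu/2}\Big(t,x-\tfrac{a+b}{2}\Big)\,G_{2\nu}(t,a-b),
\]
together with the Chapman--Kolmogorov identity $\int_\R G_\sigma(r,z-a)G_\sigma(t-r,z-b)\,\ud z=G_\sigma(t,a-b)$. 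The second, read off directly from \eqref{E:K}, is that $\calK$ \emph{factors} as $\calK(t,x)=\psi(t)\,G_{\nu/2}(t,x)$, where $\psi(t)=\tfrac{\lambda^2}{\sqrt{4\pi\nu t}}+\tfrac{\lambda^4}{2\nu}e^{\lambda^4 t/(4\nu)}\Phi(\lambda^2\sqrt{t/(2\nu)})$ depends on $t$ alone; consequently every spatial convolution occurring below involves only Gaussians of type $G_{\nu/2}$, whose convolution merely adds the time parameters.

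Writing $J_0^2(r,z)=\iint_{\R^2}\mu(\ud z_1)\mu(\ud z_2)\,G_\nu(r,z-z_1)G_\nu(r,z-z_2)$ and inserting this into the two summands of $\calI(t,x_1,t,x_2)$, I would apply the product identity to $G_\nu(r,z-z_1)G_\nu(r,z-z_2)$ and to $G_\nu(t-r,x_1-z)G_\nu(t-r,x_2-z)$, then carry out the $z$-integral --- and, in the $J_0^2\star\calK$ term, the inner space integral as well --- by Chapman--Kolmogorov. After interchanging the order of integration (justified below), one arrives at
\[
\calI(t,x_1,t,x_2)=\lambda^2\iint_{\R^2}\mu(\ud z_1)\mu(\ud z_2)\;G_{\nu/2}\Big(t,\tfrac{(x_1-z_1)+(x_2-z_2)}{2}\Big)\,M\big(t,x_1-x_2,z_1-z_2\big),
\]
where, writing $g_a(t):=G_{2\nu}(t,a)$ and $\star_t$ for convolution in the time variable only,
\[
M(t,y,v)=(g_y\star_t g_v)(t)+(g_y\star_t\psi\star_t g_v)(t).
\]

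It remains to identify $M$. Taking Laplace transforms in $t$ and using $\mathcal{L}[g_a](s)=\tfrac{1}{2\sqrt\nu}\,s^{-1/2}e^{-|a|\sqrt{s/\nu}}$ together with $\mathcal{L}[\psi](s)=\tfrac{b}{\sqrt s-b}$, where $b:=\tfrac{\lambda^2}{2\sqrt\nu}$ (the latter following from the standard transforms $\mathcal{L}[t^{-1/2}]=\sqrt{\pi/s}$ and $\mathcal{L}[e^{b^2t}\Erfc(-b\sqrt t)]=(\sqrt s(\sqrt s-b))^{-1}$), the convolution theorem yields
\[
\mathcal{L}[M(\cdot,y,v)](s)=\frac{1}{4\nu}\cdot\frac{e^{-(|y|+|v|)\sqrt{s/\nu}}}{\sqrt s\,(\sqrt s-b)}.
\]
Inverting by the partial fraction $\tfrac{1}{\sqrt s(\sqrt s-b)}=\tfrac1b\big(\tfrac1{\sqrt s-b}-\tfrac1{\sqrt s}\big)$ and the transform pair $\mathcal{L}^{-1}\big[\tfrac{e^{-q\sqrt s}}{\sqrt s(\sqrt s-b)}\big](t)=e^{b^2t-bq}\Erfc\big(\tfrac{q}{2\sqrt t}-b\sqrt t\big)$, with $q=(|y|+|v|)/\sqrt\nu$, then rewriting $\Erfc$ through $\Phi$ and substituting back $b=\lambda^2/(2\sqrt\nu)$, one recovers exactly $\lambda^2 M(t,y,v)=\calK^\dagger(t,x_1-z_1,x_2-z_2,x_1-x_2)\big/G_{\nu/2}(t,\tfrac{(x_1-z_1)+(x_2-z_2)}{2})$; here one notes that after the change of variables $z_i\mapsto x_i-z_i$ the quantity $|y-(z_1-z_2)|$ appearing in the statement equals $|v|$. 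This proves \eqref{E:SecTP2}. Formula \eqref{E:SecTP} follows by adding $J_0(t,x_1)J_0(t,x_2)=\iint_{\R^2}\mu(\ud z_1)\mu(\ud z_2)\,G_\nu(t,x_1-z_1)G_\nu(t,x_2-z_2)$ and applying the product identity once more, and \eqref{E:Sec} is the specialization $x_1=x_2$.

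I expect the main obstacle to be bookkeeping rather than structure: carrying the three variances $\nu/2$, $\nu$, $2\nu$ and all numerical constants correctly through the Gaussian reductions and the Laplace inversion, and converting cleanly between $\Erfc$ and $\Phi$. The one genuinely delicate analytic point is the use of Fubini's theorem for the signed measure $\mu\in\calM_H(\R)$; this is handled by first running the entire computation with $|\mu|$ in place of $\mu$, where all integrands are nonnegative (note $\calK^\dagger\ge 0$ since $\Phi\ge0$), and observing that the outcome is finite because $\calK^*$ is bounded on compact time intervals and $|\mu|$ integrates every Gaussian by \eqref{EH:J0finite} --- equivalently, because $\E\left[u(t,x_1)u(t,x_2)\right]$ is already known to be finite from \cite{ChenDalang13Heat}.
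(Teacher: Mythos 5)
Your proposal is correct and follows essentially the same route as the paper: expand $J_0^2$ against $\mu\otimes\mu$, reduce all spatial integrals via the Gaussian product identity \eqref{E:GG} and the semigroup property, exploit the factorization $\calK(t,x)=\lambda^2H(t)G_{\nu/2}(t,x)$, and evaluate the surviving time convolution by Laplace transform using the same tabulated pairs. The only (harmless) difference is organizational: the paper first inverts to get the second moment \eqref{E:Sec} and then feeds it into $\calI$ with a second inversion, whereas you merge the two stages into the single triple convolution $g_y\star_t(\delta+\psi)\star_t g_v$ and invert once, which slightly shortens the partial-fraction bookkeeping.
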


In the following, we will use the convention that for any pair $(w_1,w_2)$ of variables, 
\begin{align}\label{E:barDelta}
\bar{w}:=(w_1+w_2)/2\quad\text{and}\quad
\Delta w:=w_2-w_1.
\end{align}

\begin{remark}
Formulas \eqref{E:SecTP} and \eqref{E:SecTP2} are in the convolution form.
One can also write them
in the following inner product form:
\begin{align} \label{E:Sec_Inner}
 \E\left[u(t,x_1)u(t,x_2)\right] &=
 \iint_{\R^2} \mu(\ud z_1)\mu(\ud z_2) \: K^* \left(t,x_1,x_2,z_1,z_2;\lambda\right),
\end{align}
or
\begin{align} \label{E:Sec_Inner2}
 \E\left[u(t,x_1)u(t,x_2)\right] &=
 J_0(t,x_1)J_0(t,x_2)+ \iint_{\R^2} \mu(\ud z_1)\mu(\ud z_2) \: K^\dagger \left(t,x_1,x_2,z_1,z_2;\lambda\right),
\end{align}
where
\begin{equation}
\begin{aligned}
 K^\dagger \left(t,x_1,x_2,z_1,z_2;\lambda\right)= & 
 \frac{\lambda^2}{2\nu}G_{\nu/2}(t,\bar{x}-\bar{z}) \exp\left(-\frac{\lambda^2(|\Delta x|+|\Delta z|)}{2\nu}+\frac{\lambda^4t}{4\nu}\right)\\
&\times \Phi\left(\frac{\lambda^2\sqrt{t}}{\sqrt{2\nu}}-\frac{|\Delta x|+|\Delta z|}{\sqrt{2\nu t}}\right),
\end{aligned}
\end{equation}
and
\begin{align} \notag
 K^* \left(t,x_1,x_2,z_1,z_2\right)=& G_{\nu}(t,x_1-z_1)G_{\nu}(t,x_2-z_2) + K^\dagger \left(t,x_1,x_2,z_1,z_2;\lambda\right)\\ \notag
 =&G_{\nu/2}(t,\bar{x}-\bar{z}) \Bigg[G_{2\nu}(t,\Delta x- \Delta z)+
 \frac{\lambda^2}{2\nu} \exp\left(-\frac{\lambda^2(|\Delta x|+|\Delta z|)}{2\nu}+\frac{\lambda^4t}{4\nu}\right)\\
&\times \Phi\left(\frac{\lambda^2\sqrt{t}}{\sqrt{2\nu}}-\frac{|\Delta x|+|\Delta z|}{\sqrt{2\nu t}}\right)\Bigg].
\label{E:K*}
\end{align}
\end{remark}

\begin{example}[Delta initial data]
When $\mu=\delta_0$, then
\[
\Norm{u(t,x)}_2^2 = \calK^*(t,x,x,0;\lambda) = \lambda^{-2}\: \calK(t,x),
\]
and
\begin{align*}
\E\left[u(t,x_1)u(t,x_2)\right] =& \calK^*(t,x_1,x_2,x_1-x_2;\lambda)
\\
=& G_{\nu}(t,x_1)G_\nu(t,x_2)+ \calK^\dagger(t,x_1,x_2,x_1-x_2;\lambda)\\
=&G_{\nu}(t,x_1)G_\nu(t,x_2)
+ \frac{\lambda^2}{2\nu}  G_{\frac{\nu}{2}}\left(t,\frac{x_1+x_2}{2}\right)\\
&\times
\exp\left(\frac{
\lambda^2(\lambda^2 t-2|x_1-x_2|)}{4\nu}\right)
\Phi\left( \frac{\lambda^2 t -|x_1-x_2|}{\sqrt{2\nu t}}\right).
\end{align*}
This recovers the results in \cite[Corollary 2.8]{ChenDalang13Heat}.
\end{example}

\begin{remark}
Note that the function $m_2(t,x_1,x_2)=K^*(t,x_1,x_2,0,0)$,
or equivalently $m_2(t,x_1,x_2)=\calK^*(t,x_1,x_2,x_1-x_2)$,  solves the
following parabolic equation (see e.g., \cite[Theorem 3.2 on p. 46]{CarmonaMolchanov94PAM})
\[
\begin{cases}
\displaystyle
\frac{\partial }{\partial t} m_2(t,x_1,x_2)=H_2(\nu,\lambda)\: m_2(t,x_1,x_2), & \quad t>0, x_1, x_2\in\R,\\[1em]
\displaystyle
m_2(0,x_1,x_2) = \delta_0(x_1)\delta_0(x_2),
\end{cases}
\]
where the operator
\[
H_2(\nu,\lambda)=\frac{\nu}{2}\left(\frac{\partial^2}{\partial x_1^2}+\frac{\partial^2}{\partial x_2^2}\right)+ \lambda \delta_0(x_1-x_2)
\]
is the {\it $2$-particle Schr\"odinger operator} (see \cite{CarmonaMolchanov94PAM}).
\end{remark}

\begin{example}[Lebesgue's initial measure]
When $\mu(\ud x)=\ud x$, then from \eqref{E:SecTP2} or \eqref{E:Sec_Inner2}, 
\begin{align}\notag
 \E\left[u(t,x_1)u(t,x_2)\right]&= 1 + \iint_{\R^2} \ud z_1\ud z_2 \: \calK^\dagger(t,z_1,z_2,x_1-x_2;\lambda)\\ \notag
 &=1+ \frac{\lambda^2}{2\nu} \int_{\R} \ud z
 \exp\left(\frac{\lambda^2}{4\nu}\left[
\lambda^2 t-2(|x_1-x_2|+|x_1-x_2-z|)
\right]\right)\\ \notag
&\qquad\times
\Phi\left( \frac{\lambda^2 t -(|x_1-x_2|+|x_1-x_2-z)|)}{\sqrt{2\nu t}}\right)\\
&=2 e^{\frac{\lambda ^4 t-2 \lambda ^2 |x_1-x_2|}{4 \nu }} \Phi\left(\frac{\lambda
   ^2 t-|x_1-x_2|}{\sqrt{2 \nu t}}\right)+2\Phi\left(\frac{|x_1-x_2|}{\sqrt{2\nu t}}\right)-1,
   \label{E:SecTPConst}
\end{align}
and in particular,
\begin{align}\label{E:SecConst}
 \Norm{u(t,x)}_2^2 &=
 2 e^{\frac{\lambda ^4 t}{4 \nu }} \Phi\left(\frac{\lambda
   ^2 \sqrt{t}}{\sqrt{2 \nu}}\right).
\end{align}
These two formulas \eqref{E:SecTPConst} and \eqref{E:SecConst} recover the results in \cite[Corollary 2.5]{ChenDalang13Heat}.
Note that the equality in \eqref{E:SecTPConst} can be established through integration by parts.
\end{example}

\begin{corollary}\label{C:ExpL}
Let $L^x_t$ be the local time of the standard Brownian motion.
Then for all $\lambda\in\R$, $t>0$, and $x\in\R$,
\[
\E\left[\exp\left(\lambda^2 L^x_t\right)\right] =
2\: e^{\lambda^4 t/2-\lambda ^2 |x|} \Phi\left(\lambda^2 \sqrt{t}-|x|/\sqrt{t}\right)
+2\Phi\left(|x|/\sqrt{t}\right)-1.
\]
In particular,
\[
\E\left[\exp\left(\lambda^2 L^x_t \right)\right] \le 2\:e^{\lambda^4 t/2}+1.
\]
\end{corollary}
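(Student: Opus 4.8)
The idea is to read the identity off from the Feynman--Kac representation \eqref{E:FKF} together with the explicit two-point formula \eqref{E:SecTPConst} for Lebesgue initial data. The Lebesgue measure $\mu(\ud x)=\ud x$ has the bounded measurable density $u_0\equiv 1$ (and belongs to $\calM_H(\R)$), so \eqref{E:FKF} applies with $n=2$ and $u_0\equiv 1$; the factors $u_0(x_i+B_t^i)$ then equal $1$ and drop out, leaving
\begin{equation*}
\E\left[u(t,x_1)u(t,x_2)\right]=\E^{\B}\left[\exp\left(\lambda^2\int_0^t\delta_{x_2-x_1}\left(B_s^1-B_s^2\right)\ud s\right)\right].
\end{equation*}

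Next I would identify the functional inside the exponential with a Brownian local time. The difference $B^1-B^2$ of the two independent driving Brownian motions is a constant multiple of a standard Brownian motion, so by the occupation-times formula $\int_0^t\delta_a(B_s^1-B_s^2)\ud s$ is its local time at level $a$, and the scaling property of Brownian local time rewrites it --- after the elementary diffusive rescaling of the amplitude $\lambda^2$ and of the level $a=x_2-x_1$ --- as the local time $L_t^{x}$ at level $x$ of a \emph{standard} Brownian motion. Carrying out this rescaling (equivalently, specializing \eqref{EH:Heat} to the value of $\nu$ for which $B^1-B^2$ is itself standard) and evaluating the right-hand side via \eqref{E:SecTPConst} with the corresponding value of $\nu$ and $|x_1-x_2|=|x|$, one gets
\begin{equation*}
\E\left[\exp\left(\lambda^2 L_t^{x}\right)\right]=2\,e^{\lambda^4 t/2-\lambda^2|x|}\,\Phi\left(\lambda^2\sqrt t-|x|/\sqrt t\right)+2\,\Phi\left(|x|/\sqrt t\right)-1 .
\end{equation*}
The case $\lambda=0$ is trivial (both sides equal $1$, using $\Phi(-a)+\Phi(a)=1$); for $\lambda\ne 0$ only $\lambda^2\ge 0$ enters, and since $x$ ranges over $\R$ and $\lambda^2$ over $[0,\infty)$ the formula holds in the stated generality.

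\textbf{The bound and the main obstacle.} The inequality $\E[\exp(\lambda^2 L_t^x)]\le 2e^{\lambda^4 t/2}+1$ is then immediate from the closed form: since $0\le\Phi\le 1$ and $\lambda^2|x|\ge 0$, the first summand is at most $2e^{\lambda^4 t/2}$, while $2\Phi(|x|/\sqrt t)-1\le 1$. The only genuine work lies in the identification step: one must align the normalization of the driving Brownian motions in \eqref{E:FKF} with the scaling conventions for the heat kernel $G_\nu$ and for standard Brownian local time, so that the diffusive rescaling of $\nu$, of the level $x_2-x_1$, and of the constant $\lambda^2$ lines up exactly. Once that bookkeeping is settled (taking $\nu=1/2$ is the cleanest choice), the remainder is a direct substitution into \eqref{E:SecTPConst} together with the trivial bound $\Phi\le 1$.
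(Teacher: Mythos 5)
Your proof is correct and follows essentially the same route as the paper: Feynman--Kac with $n=2$ and $u_0\equiv 1$, identification of $\int_0^t\delta_{x_2-x_1}(B_s^1-B_s^2)\,\ud s$ with a standard Brownian local time by diffusive scaling, and substitution into \eqref{E:SecTPConst}, with the bound following from $\Phi\le 1$. The only difference is the normalization bookkeeping you flag yourself: you take $\nu=1/2$ so that $B^1-B^2$ is already standard, whereas the paper keeps $\nu=1$ and instead takes $\rho(u)=\sqrt{2}\,\lambda u$ and evaluates the two-point function at time $t/2$; both choices land on the same closed form.
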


\begin{proof}
Let $u(t,x)$ be a solution to \eqref{EH:Heat} with $\rho(u)=\sqrt{2}\: \lambda u$, $\nu=1$, and $u_0(x)\equiv 1$.
By the Feynman-Kac formula \eqref{E:FKF} with $n=2$, $x_1=0$, and $x_2=x$,
\begin{align*}
\E[u(t/2,0)u(t/2,x)] &= \E \Big[\exp\Big(2\lambda^2 \int_0^{t/2} \delta_{x}(B_s^1-B_s^2)\ud s \Big)\Big]
= \E\Big[\exp\Big(2\lambda^2 \int_0^{t/2} \delta_{x}(B_{2s}')\ud s \Big)\Big]\\
&=\E\Big[\exp\Big(\lambda^2 \int_0^{t} \delta_{x}(B_{r}')\ud r \Big)\Big]=
\E\left[\exp\left(\lambda^2 L^x_t\right)\right],
\end{align*}
where $B'_t$ is a standard Brownian motion.
Then apply \eqref{E:SecTPConst} with $\lambda^2$, $t$, and $\nu$ replaced by
$2\lambda^2$, $t/2$, and $1$, respectively.
\end{proof}

\bigskip
For the $p$-th moments, we have the following bound, which simplifies the expression in \cite{ChenDalang13Heat}.
Denote 
\[
c_p=\begin{cases}
     1& \text{if $p=2$},\cr
     2& \text{if $p>2$}.
    \end{cases}
\]
\begin{theorem}
Let $u(t,x)$ be a solution to \eqref{EH:Heat} starting from $\mu\in\calM_H(\R)$.
For all $t>0$ and $x\in\R$ , the following moment bounds hold:
\begin{enumerate}[(1)]
 \item If $|\rho(x)| \le \Lip_\rho|x|$ for all $x\in\R$, then for all $p\ge 2$,
 \[
\Norm{u(t,x)}_p^2 \le \iint_{\R^2} |\mu|(\ud z_1)|\mu|(\ud z_2) \overline{\calK}^*_{p,\Lip_\rho}(t,x-z_1,x-z_2),
 \]
where
\[
 \overline{\calK}^*_{p,\Lip_\rho}(t,z_1,z_2) =c_p\: \calK^*(t,z_1,z_2,0;c_p^2 \sqrt{p/2}\Lip_\rho);
\]
 \item If $|\rho(x)| \ge \lip_\rho|x|$ for all $x\in\R$ and if $\mu\ge 0$, then
 \[
\Norm{u(t,x)}_2^2 \ge \iint_{\R^2} \mu(\ud z_1)\mu(\ud z_2) \calK^*(t,x-z_1,x-z_2,0;\lip_\rho).
 \]
\end{enumerate}
\end{theorem}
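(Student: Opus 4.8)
The plan is to reduce both parts to a renewal (in)equality for $V(t,x):=\Norm{u(t,x)}_p^2$ and then to compare $V$ with the solution of the corresponding renewal \emph{equation}, which by Theorem~\ref{T:SecMom} is exactly a double integral of $\calK^{*}$ against the initial measure. Write $J_{0,\kappa}(t,x):=\int_\R G_\nu(t,x-y)\,\kappa(\ud y)$ for a measure $\kappa$ (so $J_0=J_{0,\mu}$), and let $\star$ denote convolution in space and time. Two facts are used throughout: (i) the normalisation identity $G_\nu(t,x)^2=\tfrac{1}{2\sqrt{\pi\nu t}}\,G_{\nu/2}(t,x)$; and (ii) for $\beta\in\R$ and a non-negative $\kappa\in\calM_H(\R)$, squaring the mild formulation of \eqref{EH:Heat} with $\rho(u)=\beta u$ and initial datum $\kappa$ and applying the Walsh isometry shows that the second moment of this linear solution — which by \eqref{E:SecMom}--\eqref{E:Sec} equals $\iint_{\R^2}\kappa(\ud z_1)\kappa(\ud z_2)\,\calK^{*}(t,x-z_1,x-z_2,0;\beta)$ — is the unique solution $\Theta^\kappa_\beta$ of $\Theta^\kappa_\beta=J_{0,\kappa}^2+\beta^2\,(G_\nu^2\star\Theta^\kappa_\beta)$; equivalently $\Theta^\kappa_\beta=\sum_{n\ge0}\beta^{2n}\,(G_\nu^2)^{\star n}\star J_{0,\kappa}^2$ (the $n$-fold space--time convolutions, with the $n=0$ term equal to $J_{0,\kappa}^2$), the series expansion underlying \eqref{E:SecMom}.

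For part (1), start from $u(t,x)=J_0(t,x)+\int_0^t\!\!\int_\R G_\nu(t-s,x-y)\rho(u(s,y))\,W(\ud s,\ud y)$, so $|J_0(t,x)|\le J_{0,|\mu|}(t,x)$. If $p=2$, the Walsh isometry (and orthogonality of the two terms) with $|\rho(x)|\le\Lip_\rho|x|$ gives $V(t,x)\le J_{0,|\mu|}^2(t,x)+\Lip_\rho^2\,(G_\nu^2\star V)(t,x)$ — the renewal inequality with $c_2=1$ and $\beta=c_2^2\sqrt{2/2}\,\Lip_\rho=\Lip_\rho$. If $p>2$, Minkowski's inequality and the Burkholder--Davis--Gundy inequality in the form $\Norm{\int_0^t\!\int_\R f\,\ud W}_p\le 2\sqrt{p}\,\bigl(\int_0^t\!\int_\R\Norm{f(s,y)}_p^2\,\ud s\,\ud y\bigr)^{1/2}$ give $\Norm{u(t,x)}_p\le J_{0,|\mu|}(t,x)+2\sqrt{p}\,\Lip_\rho\,(G_\nu^2\star V)^{1/2}(t,x)$; squaring and using $(a+b)^2\le2a^2+2b^2$ yields $V(t,x)\le 2J_{0,|\mu|}^2(t,x)+8p\,\Lip_\rho^2\,(G_\nu^2\star V)(t,x)$, the renewal inequality with $c_p=2$ and $\beta^2=8p\,\Lip_\rho^2=(c_p^2\sqrt{p/2}\,\Lip_\rho)^2$. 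In either case $\Theta:=c_p\,\Theta^{|\mu|}_\beta=\iint_{\R^2}|\mu|(\ud z_1)|\mu|(\ud z_2)\,\overline{\calK}^{*}_{p,\Lip_\rho}(t,x-z_1,x-z_2)$ solves the matching renewal equation $\Theta=c_p\,J_{0,|\mu|}^2+\beta^2\,(G_\nu^2\star\Theta)$. To promote ``$V\le\cdots$'' to ``$V\le\Theta$'' I work with the Picard iterates $u_0:=J_0$, $u_{m+1}(t,x):=J_0(t,x)+\int_0^t\!\int_\R G_\nu(t-s,x-y)\rho(u_m(s,y))\,W(\ud s,\ud y)$: since $\calK^{*}\ge0$ one has $\Theta\ge c_p\,J_{0,|\mu|}^2\ge J_0^2=\Norm{u_0(t,x)}_p^2$, and the very same estimate shows that $\Norm{u_m(\cdot,\circ)}_p^2\le\Theta$ implies $\Norm{u_{m+1}(t,x)}_p^2\le c_p\,J_{0,|\mu|}^2+\beta^2\,(G_\nu^2\star\Theta)=\Theta(t,x)$; passing to the $L^p$-limit $u_m\to u$ (well-posedness, \cite{ChenDalang13Heat}) gives $V\le\Theta$, which is the bound in part (1).

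For part (2), the Walsh isometry and $|\rho(x)|\ge\lip_\rho|x|$ give, since $\mu\ge0$ (so $J_0\ge0$), the reverse renewal inequality $\Norm{u(t,x)}_2^2\ge J_0^2(t,x)+\lip_\rho^2\,(G_\nu^2\star\Norm{u(\cdot,\circ)}_2^2)(t,x)$. I again use the Picard scheme, now to bound the iterates from below by the partial sums $S_m(t,x):=\sum_{n=0}^{m}\lip_\rho^{2n}\,(G_\nu^2)^{\star n}\star J_0^2(t,x)$: one has $\Norm{u_0(t,x)}_2^2=J_0^2=S_0$, and if $\Norm{u_m(\cdot,\circ)}_2^2\ge S_m$ then the reverse inequality for $u_{m+1}$ together with $\lip_\rho^2\,(G_\nu^2\star S_m)=S_{m+1}-J_0^2$ gives $\Norm{u_{m+1}(t,x)}_2^2\ge S_{m+1}$. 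Letting $m\to\infty$ and using $\lim_m S_m=\Theta^\mu_{\lip_\rho}=\iint_{\R^2}\mu(\ud z_1)\mu(\ud z_2)\,\calK^{*}(t,x-z_1,x-z_2,0;\lip_\rho)$ (by \eqref{E:SecMom}--\eqref{E:Sec}, valid because $\mu\ge0$) yields the asserted lower bound.

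The inputs I will simply quote — finiteness of $\Norm{u_m(t,x)}_p$ for $t>0$, the $L^p$-convergence $u_m\to u$, local-in-time integrability of $J_0^2$ against $G_\nu^2$, and the identification of $\sum_{n\ge1}\beta^{2n}(G_\nu^2)^{\star n}$ with the kernel function $\calK$ of \eqref{E:K} ($\lambda$ replaced by $\beta$) — are all available from \cite{ChenDalang13Heat}, and this is exactly where the hypothesis $\mu\in\calM_H(\R)$ is used. The only genuinely new computation is checking that the constants $c_p$ and $c_p^2\sqrt{p/2}\,\Lip_\rho$ reproduce precisely the renewal equation solved by $\overline{\calK}^{*}_{p,\Lip_\rho}$, which was done above. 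I expect the main point requiring care to be the measure-valued initial datum: $V$, $\Theta$ and $S_m$ all blow up as $t\downarrow0$, so a direct Gronwall- or tail-type argument applied to $\Norm{u(t,x)}_2^2$ itself would be delicate; comparing the Picard iterates — which are finite for every $t>0$ — term by term, as above, is precisely what circumvents this difficulty.
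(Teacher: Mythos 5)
Your argument is correct, but it takes a genuinely different route from the paper's own proof, which is essentially a two-line reduction: the paper simply observes that the $p$-th moment upper bound of \cite[Theorem 2.4 and (2.21)]{ChenDalang13Heat} already has the same form as the second-moment formula with $\lambda$ replaced by $a_{p,0}z_p\Lip_\rho=c_p^2\sqrt{p/2}\,\Lip_\rho$ and an overall factor $c_p$, so that the closed-form kernel $\calK^*$ from Theorem~\ref{T:SecMom} can be substituted directly; part (2) is declared clear for the same reason. You instead reconstruct from scratch the content of the cited results: the renewal inequality $V\le c_pJ_{0,|\mu|}^2+\beta^2(G_\nu^2\star V)$ via Walsh's isometry (for $p=2$) and Burkholder--Davis--Gundy plus Minkowski (for $p>2$), the comparison with the renewal \emph{equation} carried out on the Picard iterates so as to avoid the blow-up of $V$ as $t\downarrow 0$ for measure-valued data, and the reverse inequality with partial sums $S_m$ for the lower bound. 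Your constants check out: $z_p=c_p^{3/2}\sqrt{p/2}=2\sqrt{p}$ for $p>2$, so $c_pz_p^2\Lip_\rho^2=8p\Lip_\rho^2=(c_p^2\sqrt{p/2}\,\Lip_\rho)^2$, and $c_p\,\Theta^{|\mu|}_\beta$ does solve the matching renewal equation and equals the stated double integral by \eqref{E:Sec}. What your version buys is a self-contained proof that makes explicit where $c_p$ and $c_p^2\sqrt{p/2}$ come from and why the iterate-by-iterate comparison is the right way to handle initial measures; what it costs is length, and a reliance on the same external inputs (well-posedness, $L^p$-convergence of the Picard scheme, identification of $\sum_{n\ge1}\beta^{2n}(G_\nu^2)^{\star n}$ with $\calK$) that the paper's citation already encapsulates. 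The only point worth tightening is in part (2): the hypothesis $\mu\ge 0$ is not really used where you invoke it ($J_0^2\ge 0$ holds for signed $\mu$ as well); it is the standing assumption under which the lower bound is asserted, so you should simply carry it along rather than attribute the reverse renewal inequality to it.
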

\begin{proof}
Part (2) is clear. As for (1), note that the upper bounds for both $2$nd and $p$-th moments
share similar forms (see \cite[(2.21)]{ChenDalang13Heat}), but with different parameters.
By the notation in \cite{ChenDalang13Heat}, $a_{p,0}=\sqrt{c_p}$ and $z_p=c_p^{3/2}\sqrt{p/2}$.
Then   replacing the parameter $\lambda$ in $\calK^*$ by $a_{p,0}z_p\Lip_\rho$ and multiplying it by a factor $c_p$,
one passes from $2$nd moment to $p$-th moment (see \cite[Theorem 2.4]{ChenDalang13Heat}).
\end{proof}

\bigskip
For the alternative proof of Theorem \ref{T:SecMom},
we will need the following joint density of the standard Brownian motion $B_t$ and its local
time $L^a_t $ at a level $a\in\R$, which is by itself interesting.
When $a=0$, it is well known (see, e.g., \cite[Exercise 1, on p. 181]{ChungWilliams90}) that this law is
\begin{align}\label{E:LocB0}
P\left(B_t^1\in\ud y, L_t^0\in\ud v\right)
=\frac{|y|+v}{\sqrt{2\pi t^3}} \exp\left(-\frac{\left(|y|+v\right)^2}{2t}\right) 
\Indt{v\ge 0}\ud y\ud v.
\end{align}
More generally, we have the following theorem.

\begin{theorem}\label{T:LocB}
The joint distribution of $(B_t,L_t^a)$ for $t>0$ and $a\in\R$ is
\begin{equation}\label{E:LocB}
\begin{aligned}
P\left(B_t^1\in\ud y, L_t^a\in\ud v\right) = \Indt{v\ge 0}&
\Bigg[\:\frac{|a|+|y-a|+v}{(2\pi t)^3} \exp\left(-\frac{(|a|+|y-a|+v)^2}{2t}\right)\\
& + \frac{1}{\sqrt{2\pi t}}\left(e^{-\frac{y^2}{2t}}-e^{-\frac{(2a-y)^2}{2t}}\right)\Indt{\sign(a)y\le |a|} \delta_{0}(v)\Bigg] \ud y\ud v,
\end{aligned}
\end{equation}
where $\sign(a)=1$ if $a\ge 0$ and $-1$ if $a<0$.
\end{theorem}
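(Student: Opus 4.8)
The plan is to reduce everything to the known case \eqref{E:LocB0} by a first‑passage decomposition. Write $\tau_a:=\inf\{s\ge 0:B_s=a\}$. Since the local time $L^a$ increases only on the set $\{B=a\}$, we have $L^a_t=0$ on $\{t\le\tau_a\}$ and $L^a_t>0$ on $\{t>\tau_a\}$; hence the law of $(B_t,L^a_t)$ is the sum of its restriction to $\{t<\tau_a\}$, which is an atom on $\{v=0\}$, and its restriction to $\{t>\tau_a\}$, which is absolutely continuous in $v>0$. By the symmetry $B\mapsto-B$ we may and do assume $a\ge 0$; relabelling for $a<0$ is exactly what produces the factor $\sign(a)$ in \eqref{E:LocB}.

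For the atomic part (with $a>0$), $\{t<\tau_a\}=\{\sup_{0\le s\le t}B_s<a\}$, so by the reflection principle $P\big(B_t\in\ud y,\ \sup_{0\le s\le t}B_s<a\big)=\big(p_t(y)-p_t(2a-y)\big)\Indt{y<a}\,\ud y$, where $p_t(y)=(2\pi t)^{-1/2}e^{-y^2/(2t)}$. This matches the $\delta_0(v)$ line in \eqref{E:LocB}, and it vanishes when $a=0$, consistent with \eqref{E:LocB0}.

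For the absolutely continuous part I would condition on the value $\tau_a=s\in(0,t)$, whose density is the first‑passage density $q_s(a):=a\,(2\pi s^3)^{-1/2}e^{-a^2/(2s)}$. By the strong Markov property, $\widetilde B_u:=B_{\tau_a+u}-a$ is a standard Brownian motion issued from $0$ and independent of $\mathcal F_{\tau_a}$; since $B$ picks up no local time at level $a$ before time $\tau_a$, additivity of Brownian local time gives $L^a_t=\widetilde L^0_{t-\tau_a}$, where $\widetilde L^0$ is the local time of $\widetilde B$ at $0$, and $B_t=a+\widetilde B_{t-\tau_a}$. Inserting \eqref{E:LocB0} applied to $\widetilde B$ and integrating in $s$ gives, for $v>0$,
\begin{equation*}
P\big(B_t\in\ud y,\ L^a_t\in\ud v\big)=\ud y\,\ud v\int_0^t q_s(a)\,\frac{|y-a|+v}{\sqrt{2\pi(t-s)^3}}\,\exp\!\left(-\frac{(|y-a|+v)^2}{2(t-s)}\right)\ud s .
\end{equation*}
Both factors in the integrand are first‑passage densities — to level $a$ and to level $|y-a|+v$ — so the convolution identity $\int_0^t q_s(c_1)\,q_{t-s}(c_2)\,\ud s=q_t(c_1+c_2)$, which follows from $\tau_{c_1+c_2}=\tau_{c_1}+(\tau_{c_1+c_2}-\tau_{c_1})$ with the increment an independent copy of $\tau_{c_2}$ (or by completing the square in the integral), collapses the display to $q_t\big(a+|y-a|+v\big)$. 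That is precisely the first line of \eqref{E:LocB}, and adding back the atomic part completes the proof.

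The routine bookkeeping aside, the two substantive points are: (i) identifying $L^a_t$ with $\widetilde L^0_{t-\tau_a}$ together with the independence from $\mathcal F_{\tau_a}$ furnished by the strong Markov property, which rests on additivity of local time and the absence of local time at $a$ before $\tau_a$; and (ii) the convolution identity for first‑passage densities. Both are classical, and I expect (i) — setting up the strong Markov step so that \eqref{E:LocB0} applies verbatim to the post‑$\tau_a$ motion — to be where most of the care is needed, while (ii) can be quoted or verified by an elementary change of variables.
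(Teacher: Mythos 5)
Your proof is correct and follows essentially the same route as the paper: decompose at the first passage time $\tau_a$, apply the known $a=0$ law \eqref{E:LocB0} to the post-$\tau_a$ motion via the strong Markov property, collapse the resulting time-convolution using the semigroup identity for first-passage densities (the paper's Lemma \ref{L:IntS}), handle the atomic part on $\{v=0\}$ with the reflection principle for $(B_t,\sup_{s\le t}B_s)$, and treat $a<0$ by the symmetry $B\mapsto -B$. No substantive differences.
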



\begin{corollary}
The law of $L_t^a$ for $t>0$ and $a\in\R$ is
\[
P\left(L_t^a\in\ud v\right) = \left(\frac{\sqrt{2}}{\sqrt{\pi t}} \exp\left(-\frac{(v+|a|)^2}{2 t}\right)+\left[2\Phi\left(\frac{|a|}{\sqrt{t}}\right)-1\right]\delta_0(v)\right)
\Indt{v\ge 0}\ud v.
\]
\end{corollary}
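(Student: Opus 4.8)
The plan is to derive the law of $L_t^a$ simply as the $y$-marginal of the joint density of $(B_t,L_t^a)$ supplied by Theorem \ref{T:LocB}, i.e.\ by integrating \eqref{E:LocB} over $y\in\R$. Since $(B_t)_{t\ge 0}$ and $(-B_t)_{t\ge 0}$ have the same law and $L_t^a(B)=L_t^{-a}(-B)$, the distribution of $L_t^a$ depends on $a$ only through $|a|$, so I may and do assume $a\ge 0$, whence $|a|=a$ and $\sign(a)=1$; the general formula then follows by replacing $a$ with $|a|$.

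For the absolutely continuous part (the summand of \eqref{E:LocB} carrying no $\delta_0$), I would compute
\[
\int_\R \frac{a+|y-a|+v}{\sqrt{2\pi t^3}}\exp\left(-\frac{(a+|y-a|+v)^2}{2t}\right)\ud y
\]
by splitting the domain into $\{y>a\}$ and $\{y<a\}$. On $\{y>a\}$ one has $a+|y-a|+v=y+v$, and the substitution $w=y+v$ turns the integral into $\frac{1}{\sqrt{2\pi t^3}}\int_{a+v}^\infty w\,e^{-w^2/(2t)}\ud w=\frac{1}{\sqrt{2\pi t}}e^{-(a+v)^2/(2t)}$, using $\int_c^\infty w\,e^{-w^2/(2t)}\ud w=t\,e^{-c^2/(2t)}$. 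On $\{y<a\}$ one has $a+|y-a|+v=2a-y+v$, and the substitution $w=2a-y+v$ again yields lower limit $a+v$ and the same value $\frac{1}{\sqrt{2\pi t}}e^{-(a+v)^2/(2t)}$. Adding the two contributions gives $\frac{2}{\sqrt{2\pi t}}e^{-(v+a)^2/(2t)}=\frac{\sqrt2}{\sqrt{\pi t}}e^{-(v+a)^2/(2t)}$, which is the continuous density claimed.

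For the atom at $v=0$, I would integrate the bracketed Gaussian difference over its support $\{y\le a\}$:
\[
\int_{-\infty}^{a}\frac{1}{\sqrt{2\pi t}}\left(e^{-y^2/(2t)}-e^{-(2a-y)^2/(2t)}\right)\ud y.
\]
The first term is $\Phi(a/\sqrt t)$ directly, and in the second the substitution $z=2a-y$ maps $\{y\le a\}$ onto $\{z\ge a\}$, giving $\int_a^\infty \frac{1}{\sqrt{2\pi t}}e^{-z^2/(2t)}\ud z=1-\Phi(a/\sqrt t)$. Hence the mass of the atom is $\Phi(a/\sqrt t)-(1-\Phi(a/\sqrt t))=2\Phi(a/\sqrt t)-1$, and assembling the two parts (and restoring $|a|$) produces exactly the stated formula for $P(L_t^a\in\ud v)$.

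I do not expect a genuine obstacle here: the whole argument is a one-variable marginalization of a density already established in Theorem \ref{T:LocB}, and the only point requiring mild care is the bookkeeping of the two absolute-value regimes — one should verify, as above, that the substitutions $w=y+v$ and $w=2a-y+v$ both produce the common lower limit $a+v$, so that the two half-line contributions coincide. As a consistency check, integrating the resulting density in $v$ over $[0,\infty)$ returns $2(1-\Phi(a/\sqrt t))+2\Phi(a/\sqrt t)-1=1$, confirming that it is a probability law.
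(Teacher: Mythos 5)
Your proposal is correct and follows the same route as the paper, which likewise just integrates the joint density \eqref{E:LocB} over $y\in\R$ (using an elementary antiderivative for the first term and the definition of $\Phi$ for the second); your explicit splitting into the regimes $y>a$ and $y<a$ and the reduction to $a\ge 0$ by symmetry are exactly the bookkeeping the paper leaves implicit. Note only that you correctly read the normalization of the absolutely continuous part as $\sqrt{2\pi t^3}$ (as in the proof of Theorem \ref{T:LocB}) rather than the $(2\pi t)^3$ appearing in the displayed statement of \eqref{E:LocB}, which is a typo.
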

\begin{proof}
Integrate the right hand of \eqref{E:LocB} for $\ud y$ over $\R$. For the first term, use the integration-by-parts formula. 
For the second term, use the definition of $\Phi(\cdot)$.
\end{proof}

\subsection{Feynman-Kac formulas for measure-valued initial data}


Let $H$ be a Hilbert space with inner product $\InPrd{\cdot,\cdot}$ and
let $W=\{W(h), \: h\in H\}$ be a zero mean Gaussian process
with covariance function $\E(W(h)W(g))=\InPrd{h,g}$.
For any square integrable random variable $F\in L^2(\Omega)$, let 
\[
F=\E[F] +\sum_{n=1}^\infty I_n(f_n)
\]
be its chaos expansion, where $f_n\in H^{\hat{\otimes}n}$ (symmetric tensor product)
and $I_n$ denotes the multiple stochastic integral. 
Let $L$ be the generator of the {\it Ornstein-Uhlenbeck} semigroup, i.e., $LF=-\sum_{n=1}^\infty n I_n(f_n)$.
For any $s\in\R$, denote by $\D^{s,p}(H)$ the completion of $H$-valued polynomial random variables with respect to the norm
\[
\Norm{F}_{s,p}=\Norm{(I-L)^{s/2} F}_{L^p(\Omega;H)}.
\]
We refer \cite{Nualart06} for more details.

 In our framework, $H=L^2(\mathbb{R}_+;\mathbb{R}^n)$ and for any $h\in H$, $W(h)= \sum_{i=1}^n \int_0^\infty h^i_s dB^i_s$, where $\{B_t^i, t\ge 0\}$, $i=1\dots, n$, are $n$ independent standard Brownian motions on $\mathbb{R}$. 

\begin{theorem}\label{T:mun}
For any $\mu_i\in\calM_H(\R)$, $x_i\in\R$, $h_i\in H$, $i=1,\dots,n$. It holds that
\[
\prod_{i=1}^n\mu_i(W (h_i)+x_i)\in \D^{-\alpha,p}(\R)\qquad \text{if $\alpha+n/p>n$, $\alpha>0$ and $p>1$.}
\]
\end{theorem}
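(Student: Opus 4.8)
The plan is to reduce the claim to a statement about a single Gaussian variable and a single measure, and then to estimate the Meyer--Watanabe $\D^{-\alpha,p}$ norm of $\mu(W(h)+x)$ by duality. First I would observe that since the $B^i$ are independent, the product $\prod_{i=1}^n\mu_i(W(h_i)+x_i)$ factorizes over the $n$ coordinates; a tensor-product (or submultiplicativity) argument for Sobolev--Watanabe spaces over independent noises shows that if each factor $\mu_i(W(h_i)+x_i)$ lies in $\D^{-\beta_i,p_i}$ then the product lies in $\D^{-\alpha,p}$ with $\alpha=\sum_i\beta_i$ and $1/p=\sum_i 1/p_i$. Taking $\beta_i=\beta$ and $p_i=q$ with $p=q/n$, the hypothesis $\alpha+n/p>n$ with $\alpha=n\beta$ becomes $\beta+1/q>1$, i.e. $\beta>1-1/q=1/q'$. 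So the whole theorem follows from the one-variable assertion: for $\mu\in\calM_H(\R)$, $h\in H\setminus\{0\}$, $x\in\R$, and any $q>1$, one has $\mu(W(h)+x)\in\D^{-\beta,q}$ whenever $\beta>1/q'$ where $1/q+1/q'=1$.

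For the one-variable estimate I would use duality: $\D^{-\beta,q}$ is the dual of $\D^{\beta,q'}$, so it suffices to bound $|\E[\mu(W(h)+x)\,G]|$ by $C\,\Norm{G}_{\beta,q'}$ for smooth $G$. Writing $W(h)=\Norm{h}_H\,N$ with $N\sim\mathcal N(0,1)$ standard, and conditioning (or using the fact that the density of $N$ is smooth), the pairing becomes, after disintegration, an integral $\int_\R \mu(\ud z)\,\E[G\mid W(h)+x=z]\,p_N((z-x)/\Norm{h}_H)\Norm{h}_H^{-1}$, or more cleanly: test $\mu(W(h)+x)$ against $G$ by regularizing $\mu$ to $\mu_\varepsilon=\mu*G_{\nu}(\varepsilon,\cdot)$, for which the pairing is literally $\int_\R\mu_\varepsilon(z)\,\E[G\,\delta_z(W(h)+x)]\,dz$ — no, cleaner still: use that $f\mapsto \E[f(W(h)+x)G]$ defines a distribution in $f$ with regularity controlled by $\Norm{G}_{\beta,q'}$, so one needs the Sobolev regularity of the map $z\mapsto \E[G\mid W(h)+x=z]$ times a Gaussian weight, paired against the measure $\mu$. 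The key analytic input is the classical fact (Watanabe, Nualart--Vives) that for a nondegenerate smooth Wiener functional $F=W(h)$, composition $G\mapsto T\circ (\text{law under }F)$ maps $\D^{\beta,q'}$ into the fractional Sobolev space $W^{\beta,q'}(\R)$ of the marginal density, uniformly; and since $\mu\in\calM_H(\R)$ has, by \eqref{EH:J0finite}, finite Gaussian-weighted mass, it defines a bounded functional on $W^{\beta,q'}(\R)$ provided the negative Sobolev exponent for a point mass, which is $1/q'$ in dimension one (a Dirac is in $W^{-s,q}(\R)$ iff $s>1/q'$), is dominated by $\beta$. This is exactly the threshold $\beta>1/q'$, i.e. $\beta+1/q>1$.

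So the concrete steps are: (i) reduce to one variable via the tensorization/submultiplicativity of $\D^{s,p}$ over a product noise; (ii) for one variable, regularize $\mu$ by the heat semigroup to $\mu_\varepsilon$, estimate $\Norm{\mu_\varepsilon(W(h)+x)}_{-\beta,q}$ uniformly in $\varepsilon$ by dualizing against $G\in\D^{\beta,q'}$; (iii) identify the pairing with an integral over $\R$ of $\mu_\varepsilon$ against the conditional expectation density, and invoke the Gaussian-density smoothness together with the $\D^{\beta,q'}\to W^{\beta,q'}(\R)$ mapping property to get the bound $C\Norm{\mu_\varepsilon\text{-weighted mass}}\,\Norm{G}_{\beta,q'}$ — the crucial point being that a point evaluation costs exactly $1/q'$ derivatives in one dimension, matching the hypothesis; (iv) pass $\varepsilon\to0$ using completeness of $\D^{-\beta,q}$, identifying the limit with $\mu(W(h)+x)$.

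The main obstacle is step (iii): making rigorous the claim that composing with a nondegenerate Gaussian variable sends $\D^{\beta,q'}$ boundedly into the fractional Sobolev space in which a Dirac mass lives for $\beta>1/q'$, and handling the Gaussian tail so that the $\calM_H(\R)$ condition (finiteness of $\int e^{-ax^2}|\mu|(\ud x)$ for all $a>0$) is precisely what is needed to pair against $\mu$ globally rather than just locally. Concretely one must track the $x$-dependence of the Gaussian weight $\exp(-(z-x)^2/(2\Norm{h}_H^2))$ and verify it is absorbed by $\mu\in\calM_H(\R)$; I expect this to be routine given \eqref{EH:J0finite} but it is where the hypotheses genuinely enter. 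A secondary technical point is justifying the tensorization of negative-order spaces over independent noises, which can be done by writing chaos expansions and using that $(I-L)$ acts diagonally, or by a standard duality-plus-Hölder argument; I would state it as a lemma and prove it quickly.
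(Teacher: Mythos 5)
Your overall architecture (reduce to a single Gaussian coordinate, then handle one measure by Watanabe's composition theory and duality) is genuinely different from the paper's proof, which never tensorizes: the paper writes $(I-L)^{-\alpha/2}=\Gamma(\alpha/2)^{-1}\int_0^\infty e^{-t}t^{\alpha/2-1}T_t\,\ud t$, computes $T_t\bigl(\prod_i\mu_i(x_i+W(h_i))\bigr)$ in closed form by Mehler's formula (Lemma \ref{L:Ttn}), and then bounds the $L^p$ norm of the resulting product of smoothed measures by an explicit heat-kernel inequality (Lemma \ref{L:p's G}), obtaining $\Norm{T_tF}_p\lesssim t^{\frac{n}{2}(\frac1p-1)}$ and hence convergence of the $t$-integral exactly when $\alpha+n/p>n$. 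Your route could in principle work, but as written it contains a concrete quantitative gap in step (i). With $\beta_i=\beta$, $p_i=q$ and $1/p=\sum_i1/p_i=n/q$, the hypothesis $\alpha+n/p>n$ with $\alpha=n\beta$ reduces to $\beta+n/q>1$, not to $\beta+1/q>1$ as you wrote. Since the sharp one-variable threshold is $\beta>1-1/q$ (this part of your analysis is right: a Dirac costs $1/q'$ derivatives), the H\"older-based tensorization only yields $\alpha>n-1/p$, which for $n\ge 2$ is strictly weaker than the claimed $\alpha>n-n/p$; e.g.\ for $n=p=2$ you would prove membership only for $\alpha>3/2$ while the theorem asserts it for $\alpha>1$. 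The arithmetic slip hides the fact that submultiplicativity with $1/p=\sum 1/p_i$ cannot reach the stated range.

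The fix is to exploit independence more strongly than H\"older does. Because the factors depend on independent components of the noise, $T_t$ factorizes and
\[
\Norm{T_t\textstyle\prod_{i=1}^nF_i}_p^p=\E\Bigl[\prod_{i=1}^n|T_t^{(i)}F_i|^p\Bigr]=\prod_{i=1}^n\E\bigl[|T_t^{(i)}F_i|^p\bigr],
\]
so each factor is measured in the \emph{same} $L^p$ and contributes $t^{-\frac12(1-1/p)}$, giving $\Norm{T_t\prod F_i}_p\lesssim t^{-\frac n2(1-\frac1p)}$ and hence the full range $\alpha>n(1-1/p)$; this is in substance what the paper's direct computation achieves. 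Your one-variable step would then need to be proved at exponent $p$ (not $np$), in the quantitative form $\Norm{T_t\,\mu(W(h)+x)}_p\lesssim t^{-\frac12(1-1/p)}$ with the constant controlled by the Gaussian-weighted mass of $|\mu|$, which is where $\calM_H(\R)$ enters. Finally, note that your key analytic input in step (iii) (the mapping $\D^{\beta,q'}\to W^{\beta,q'}(\R)$ with uniform Gaussian-weighted control in $z$) is asserted rather than proved; it is believable but nontrivial, and the Mehler-formula computation sidesteps it entirely.
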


\begin{theorem}\label{T:fL}
Let $ \widetilde{B}^i =\{\widetilde{B}_t^i ,t\ge 0\}$, $i \in I $, be one-dimensional Brownian motions  belonging to Gaussian space spanned by $W$, where $I$ is a finite set with $m$ elements.
Let $L_t^{i,x} $, $i \in I$, be the local time of $\widetilde{B}^i$.
Suppose that the function $f:\R^m_+\mapsto \R$  satisfies that 
$\frac{\partial^2   }{\partial x_i^2} f \ge 0$ and
\begin{align}\label{E:Ct}
\max_{i=1,\dots,m}
\sup_{\epsilon_1,\dots,\epsilon_m\in (-1,1)}
\Norm{f_i\left(L^{1,x_1+\epsilon_1}_t +\epsilon_1,\dots,L^{m,x_m+\epsilon_m}_t+\epsilon_m\right)}_p<+\infty,
\end{align}
for all $t\ge 0$, $x_i\in\R$, and $p\ge 2$, where $f_i=\frac{\partial}{\partial x_i}f$.
Then
\[
f\left(L^{1,x_1}_t,\dots, L^{m,x_m}_t\right)\in \D^{\alpha,p}(\R) \qquad \text{if $p> 1$ and $\alpha<1/2$.}
\]
In particular, one can choose $f(z_1,\dots,z_m)=\exp\left(\lambda^2 \sum_{j=1}^m z_j\right)$, $\lambda\in\R$.
\end{theorem}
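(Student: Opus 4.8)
\medskip
\noindent\textbf{Proof strategy.}
The plan is to show that $f\big(L^{1,x_1}_t,\dots,L^{m,x_m}_t\big)$ belongs to $\D^{\alpha,p}$ for every $p>1$ and $\alpha<\tfrac12$ by a regularization argument combined with real interpolation between $L^p(\Omega)$ and $\D^{1,p}$. Fix a smooth probability density $\varphi$ supported in $(-1,1)$ and, for $\delta\in(0,1)$, set $\varphi_\delta(y):=\delta^{-1}\varphi(y/\delta)$ and
\[
L^{i,x}_{t,\delta}:=\int_0^t\varphi_\delta\big(\widetilde{B}^i_s-x\big)\,\ud s=\int_\R\varphi_\delta(z-x)\,L^{i,z}_t\,\ud z,
\]
the last equality being the occupation times formula. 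Each $L^{i,x}_{t,\delta}$ is a bounded random variable lying in $\D^{\infty}$, so, applying the chain rule to a $C^1$ function with bounded derivatives that agrees with $f$ on the (bounded) range of $\mathbf{L}_{t,\delta}:=\big(L^{i,x_i}_{t,\delta}\big)_{i=1}^m$, we obtain $F_\delta:=f(\mathbf{L}_{t,\delta})\in\D^{1,p}$ for all $p$, with $DF_\delta=\sum_{i=1}^m f_i(\mathbf{L}_{t,\delta})\,DL^{i,x_i}_{t,\delta}$. The remark that makes hypothesis \eqref{E:Ct} usable is that, since $z\mapsto L^{i,z}_t$ is continuous and $\int\varphi_\delta=1$, the intermediate value theorem gives $L^{i,x_i}_{t,\delta}=L^{i,x_i+\theta_i}_t$ for some random $\theta_i\in(-\delta,\delta)\subset(-1,1)$, and the same holds for every argument produced by a mean-value interpolation between $\mathbf{L}_{t,\delta}$ and $\big(L^{i,x_i}_t\big)_i$; since $\tfrac{\partial^2}{\partial x_i^2}f\ge0$ forces $f_i$ to be monotone in its $i$-th variable, $f_i$ at such an intermediate point is bounded by $f_i$ at the two endpoints. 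Hence every factor $f_i(\cdots)$ occurring below has $L^q(\Omega)$-norm bounded, \emph{uniformly in} $\delta$, by the finite quantity in \eqref{E:Ct}.

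I would then establish two estimates, uniform in $\delta\in(0,1)$ and $p\ge2$: (i)~$\Norm{F_\delta-f(L^{1,x_1}_t,\dots,L^{m,x_m}_t)}_p\le C_p\,\delta^{1/2}$, and (ii)~$\Norm{F_\delta}_{1,p}\le C_p\,\delta^{-1/2}$. Estimate (i) follows by telescoping over the $m$ coordinates, the mean value theorem, Hölder's inequality in $\Omega$, the uniform control of the $f_i$-factors just described, and the classical bound $\Norm{L^{i,x}_{t,\delta}-L^{i,x}_t}_p\le C_p\,\delta^{1/2}$, which itself results from $\Norm{L^{i,z}_t-L^{i,x}_t}_p\le C_p|z-x|^{1/2}$ (the $L^p$-form of the spatial $\tfrac12$-Hölder regularity of Brownian local time) together with $\int\varphi_\delta(z-x)|z-x|^{1/2}\,\ud z\le\delta^{1/2}$. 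Granting (i) and (ii), write $F:=f(L^{1,x_1}_t,\dots,L^{m,x_m}_t)$ and use $F=(F-F_\delta)+F_\delta$; the $K$-functional of $F$ relative to the couple $\big(L^p(\Omega),\D^{1,p}\big)$ satisfies
\[
K(s,F)\le \Norm{F-F_\delta}_p+s\,\Norm{F_\delta}_{1,p}\le C\big(\delta^{1/2}+s\,\delta^{-1/2}\big)\qquad(0<\delta<1),
\]
and optimizing in $\delta$ (roughly $\delta\sim s$) gives $\sup_{s>0}s^{-1/2}K(s,F)<\infty$; thus $F\in\big(L^p(\Omega),\D^{1,p}\big)_{1/2,\infty}$, which embeds into $\D^{\alpha,p}$ for every $\alpha<\tfrac12$, as asserted. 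I stress that the exponent $-\tfrac12$ in (ii) is essential: the brute-force bound $|\varphi_\delta'|\le C\,\delta^{-1}\widetilde{\psi}_\delta$ with $\widetilde{\psi}_\delta$ an approximate identity only gives $\Norm{F_\delta}_{1,p}\lesssim\delta^{-1}$, and the same interpolation then yields merely $\alpha<\tfrac13$.

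The main obstacle is therefore (ii), namely $\Norm{\,\Norm{DL^{i,x_i}_{t,\delta}}_H\,}_p\lesssim\delta^{-1/2}$, which forces one to exploit the \emph{oscillation} of $\varphi_\delta'$ rather than its size. Up to a deterministic factor, $D_uL^{i,x_i}_{t,\delta}$ is built from $\int_u^t\varphi_\delta'\big(\widetilde{B}^i_s-x_i\big)\,\ud s$ (this is transparent when $\widetilde{B}^i$ is a fixed linear combination of the driving Brownian motions $B^1,\dots,B^n$, as in the Feynman--Kac application; in general a Brownian motion in the Gaussian space of $W$ has the form $\widetilde{B}^i_s=W(\psi^i_s)$ with a deterministic kernel satisfying $\Norm{\psi^i_t-\psi^i_r}_H^2=|t-r|$, and contracting $DL^{i,x_i}_{t,\delta}$ against $h\in H$ produces a time integral of $\varphi_\delta'(\widetilde{B}^i_s-x_i)$ against the $\tfrac12$-Hölder weight $s\mapsto\langle\psi^i_s,h\rangle$, to which the same argument applies). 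The crucial input is the It\^o--Tanaka identity: with $q_\delta(y):=\int_{-\infty}^y\varphi_\delta$, so that $q_\delta'=\varphi_\delta$ and $0\le q_\delta\le1$,
\[
\int_r^t\varphi_\delta'\big(\widetilde{B}^i_s-x_i\big)\,\ud s=2\Big(q_\delta\big(\widetilde{B}^i_t-x_i\big)-q_\delta\big(\widetilde{B}^i_r-x_i\big)\Big)-2\int_r^t\varphi_\delta\big(\widetilde{B}^i_s-x_i\big)\,\ud\widetilde{B}^i_s.
\]
The first term is bounded by $2$; the stochastic integral is a martingale whose $L^p(\Omega)$-norm, uniformly over $r\le t$ (Doob and Burkholder--Davis--Gundy), is controlled by $\Norm{\big(\int_0^t\varphi_\delta(\widetilde{B}^i_s-x_i)^2\,\ud s\big)^{1/2}}_p$, and by the occupation times formula $\int_0^t\varphi_\delta(\widetilde{B}^i_s-x_i)^2\,\ud s=\int_\R\varphi_\delta(z-x_i)^2\,L^{i,z}_t\,\ud z\le\Norm{\varphi_\delta}_2^2\,\sup_z L^{i,z}_t\le C\,\delta^{-1}\sup_z L^{i,z}_t$, which has finite moments of every order; so this term is $O(\delta^{-1/2})$ in $L^p(\Omega)$. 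Feeding this into $\Norm{DF_\delta}_H\le\sum_i|f_i(\mathbf{L}_{t,\delta})|\,\Norm{DL^{i,x_i}_{t,\delta}}_H$, applying Hölder's inequality in $\Omega$, and invoking the uniform $f_i$-bounds from \eqref{E:Ct}, yields (ii). Finally, for $f(z_1,\dots,z_m)=\exp\big(\lambda^2\sum_{j}z_j\big)$ one has $\tfrac{\partial^2}{\partial x_i^2}f=\lambda^4f\ge0$ and $f_i=\lambda^2 f$, and \eqref{E:Ct} holds since, by the generalized Hölder inequality and Corollary \ref{C:ExpL} (whose bound $\E\big[e^{\mu^2L^x_t}\big]\le2e^{\mu^4t/2}+1$ is uniform in $x$),
\[
\Norm{\exp\Big(\lambda^2\sum_{j=1}^m\big(L^{j,x_j+\epsilon_j}_t+\epsilon_j\big)\Big)}_p\le e^{m\lambda^2}\prod_{j=1}^m\Norm{e^{\lambda^2L^{j,x_j+\epsilon_j}_t}}_{mp}\le e^{m\lambda^2}\big(2e^{m^2p^2\lambda^4t/2}+1\big)^{1/p}<\infty
\]
uniformly in $\epsilon_1,\dots,\epsilon_m\in(-1,1)$; so the theorem applies to this $f$, and the exponential factor in \eqref{E:FKF} (cf.\ \eqref{eq1}) belongs to $\D^{\alpha,p}$ for all $p>1$ and $\alpha<\tfrac12$.
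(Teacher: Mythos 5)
Your proof is correct and follows the same skeleton as the paper's: regularize the local times at scale $\epsilon$, show the $L^p(\Omega)$ approximation error is $O(\epsilon^{1/2})$ while the $\D^{1,p}$-norm of the approximation blows up like $\epsilon^{-1/2}$, use convexity of $f$ in each variable together with hypothesis \eqref{E:Ct} to control the $f_i$-factors uniformly in the regularization parameter, and interpolate to land in $\D^{\alpha,p}$ for every $\alpha<1/2$; the verification for the exponential via Corollary \ref{C:ExpL} is essentially identical. Where you genuinely diverge is in the implementation. The paper regularizes through the Tanaka representation, replacing $\Indt{B_s>x}$ in the stochastic integral by a piecewise-linear ramp, which yields the two-sided sandwich $L^{x-\epsilon}_t-\epsilon\le L^x_{\epsilon,t}\le L^{x+\epsilon}_t+\epsilon$ (this is why the shifts $+\epsilon_i$ appear inside \eqref{E:Ct}); you instead mollify the spatial variable, $\int\varphi_\delta(z-x)L^{z}_t\,\ud z$, and identify the result as $L^{x+\theta}_t$ by the intermediate value theorem, which connects to \eqref{E:Ct} at least as directly. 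More substantially, the paper imports the two quantitative rates wholesale as (2.8) and (2.9) of \cite{AiraultRenZhang00} and disposes of the final step with ``the same arguments as the proof of Theorem 1'' there, whereas you prove both rates from scratch (spatial $\tfrac12$-H\"older continuity of local time in $L^p$ for the approximation error; the It\^o--Tanaka cancellation plus Burkholder--Davis--Gundy and the occupation-times formula for the $\delta^{-1/2}$ derivative bound) and make the interpolation explicit via the $K$-functional and the embedding $(L^p(\Omega),\D^{1,p})_{1/2,\infty}\hookrightarrow\D^{\alpha,p}$. Your version is self-contained where the paper leans on citations, at the cost of length. Two small remarks: for a general $\widetilde{B}^i=W(\psi^i_\cdot)$ the cleanest route to your estimate (ii) is the isometry $\psi^i_s\leftrightarrow\one_{[0,s]}$ of the span of $\{\psi^i_s\}$ onto $L^2(\R_+)$, under which $\Norm{DL^{i,x_i}_{t,\delta}}_H^2=\int_0^t\bigl|\int_u^t\varphi_\delta'(\widetilde{B}^i_s-x_i)\,\ud s\bigr|^2\,\ud u$ exactly as for a standard Brownian motion, avoiding the H\"older-weight detour; and in the telescoping step both you and the paper evaluate $f_j$ at points whose off-diagonal coordinates sit at random intermediate levels, which convexity in the $j$-th variable alone does not control --- this is covered by \eqref{E:Ct} only informally in both arguments, and harmlessly so for the exponential example in \eqref{eq1}, which is monotone in every variable.
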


Choosing  $h_i =\mathbf{1}_{[0,t]}$, $1\le i  \le  n$, Theorem  \ref{T:mun}  implies that
\begin{align}\label{E:u}
\prod_{i=1}^n u_0\left(x_i+B_t^{i}\right)\in \D^{-\alpha,p}(\R) \qquad \text{if $\alpha+n/p>n$, $\alpha>0$ and $p>1$,}
\end{align}
for $u_0\in\calM_H(\R)$. On the other hand,  choosing $I=\{(j,k): 1\le  j<k \le n\}$ and  $\widetilde{B}^{(j,k)} =B^j-B^k$, Theorem \ref{T:fL} implies that the random variable $Y_n$ defined in  (\ref{eq1})  belongs to $  \D^{\alpha,q}(\R) $ if $q> 1$ and $\alpha<1/2$.
Therefore, by choosing $p$ close to $1$ such that $n(1-1/p)<1/2$, and choosing $q$ such that $1/p+1/q=1$,
one  can see that the Feynman-Kac formula \eqref{E:FKF} is well defined  as  the  duality relationship (\ref{eq1}), for any $u_0\in\calM_H(\R)$. This can be proved   using the fact that   $u_0 *G_1(\epsilon, \cdot)$  converges to $u_0$,  as $\epsilon$ tends to zero,  in the topology of  $\D^{-\alpha,p}(\R) $.

\section{Proof of Theorem \ref{T:SecMom}}
\label{Sec:MainProof}
\begin{proof}[Proof of Theorem \ref{T:SecMom}]
The proof consists  of  the following two steps:

{\vspace{1em}\bf\noindent Step 1.~} We first prove the second moment \eqref{E:Sec}. Write $J_0^2$ in the form of double integrals
\begin{align*}
 (J_0^2\star \calK)(t,x) &=
 \int_0^t\ud s \int_\R \ud y
 \: \calK(t-s,x-y) \iint_{\R^2}\mu(\ud z_1)\mu(\ud z_2) G_\nu(s,y-z_1)G_\nu(s,y-z_2).
\end{align*}
By \cite[Lemma 5.4]{ChenDalang13Heat} and the notation \eqref{E:barDelta},
\begin{align}\label{E:GG}
 G_\nu(s,y-z_1)G_\nu(s,y-z_2) = G_{\nu/2}(s,y-\bar{z}) G_{\nu}(2s, \Delta z).
\end{align}
Denote
\begin{align}
 \label{E2:H}
H(t)=\frac{1}{\sqrt{4\pi\nu t}}+\frac{\lambda^2}{2\nu}
\: e^{\frac{\lambda^4 t}{4\nu}}\Phi\left(\lambda^2
\sqrt{\frac{t}{2\nu}}\right).
\end{align}
By Fubini's theorem and the semigroup property of the heat kernel,
\begin{align*}
 (J_0^2\star \calK)(t,x) =&
 \lambda^2 \iint_{\R^2}\mu(\ud z_1)\mu(\ud z_2)
 \int_0^t\ud s\: G_\nu(2s,\Delta z) \int_\R \ud y
 \: \calK(t-s,x-y) G_{\nu/2}(s,y-\bar{z})\\
 =&
 \lambda^2\iint_{\R^2}\mu(\ud z_1)\mu(\ud z_2)
 G_{\nu/2}(t,x-\bar{z}) \int_0^t\ud s\: G_\nu(2s,\Delta z) H(t-s).
\end{align*}
Now we will use the Laplace transform to evaluate the $\ud s$-integral.
By \cite[(27) on p. 146]{Erdelyi1954-I},
\begin{align}\label{E:LapG}
\calL[G_{2\nu}(\cdot, x)](z) =\frac{\exp\left(-\nu^{-1/2} \left| x\right|
   \sqrt{z}\right)}{2 \sqrt{\nu  z}}.
\end{align}
By (1) on p. 137  and (5) on p. 176 of \cite{Erdelyi1954-I},
\begin{align*}
\calL[H](z)=&
\frac{\lambda^2}{4 \nu z-\lambda ^4}+\frac{1}{2 \sqrt{\nu
    z}}+\frac{\lambda^4}{2  \sqrt{\nu z} \left(4 \nu
   z-\lambda ^4\right)}\\
=&
\frac{1}{2}\left(\frac{1}{2\sqrt{\nu z}-\lambda^2}-\frac{1}{2\sqrt{\nu z}+\lambda^2}\right)
+\frac{1}{2\sqrt{\nu z}}\\
&+\frac{\lambda^2}{4\sqrt{\nu z}}\left(\frac{1}{2\sqrt{\nu z}-\lambda^2}-\frac{1}{2\sqrt{\nu z}+\lambda^2}\right).
\end{align*}
Hence,
\begin{align*}
 \calL[H](z)\calL[G_{2\nu}(\cdot, x)](z) &=
 f_{1,-}(z,x)-f_{1,+}(z,x) + f_{2}(z,x)
+ f_{3,-}(z,x)-f_{3,+}(z,x),
\end{align*}
where
\begin{align*}
f_{1,\pm}(z,x) =&\frac{\exp\left(-\nu^{-1/2} \left| x\right|
   \sqrt{z}\right)}{4\sqrt{\nu z}\left(2\sqrt{\nu z}\pm \lambda^2\right)},
\\
f_2(z,x) =&\frac{\exp\left(-\nu^{-1/2} \left| x\right|
   \sqrt{z}\right)}{4\nu z},
\\
f_{3,\pm}(z,x) =&\frac{\lambda^2\exp\left(-\nu^{-1/2} \left| x\right|
   \sqrt{z}\right)}{8\nu z\left(2\sqrt{\nu z}\pm \lambda^2\right)}.
\end{align*}
Apply \cite[(16) on p. 247]{Erdelyi1954-I} with $\alpha=\nu^{-1/2}|x|$ and $\beta=\pm\frac{\lambda^2}{\sqrt{4\nu}}$,
\begin{align}\label{E:f1}
\calL^{-1}[f_{1,\pm}(\cdot,x)](t)=&
\frac{1}{8\nu}e^{\pm\frac{\lambda^2 |x|}{2\nu}+\frac{\lambda^4 t}{4\nu}}
\Erfc\left(\frac{|x|}{\sqrt{4\nu t}}\pm\frac{\lambda^2\sqrt{t}}{\sqrt{4\nu}} \right)  .
\end{align}
Apply \cite[(3) on p. 245]{Erdelyi1954-I} with $\alpha=\nu^{-1}|x|^{2}$,
\begin{align*}
\calL^{-1}[f_{2}(\cdot,x)](t)=&
\frac{1}{4\nu}\Erfc\left(\frac{|x|}{\sqrt{4\nu t}}\right).
\end{align*}
Apply \cite[(14) on p. 246]{Erdelyi1954-I} with $\alpha=\nu^{-1/2}|x|$ and $\beta=\pm\frac{\lambda^2}{\sqrt{4\nu}}$,
\begin{align*}
 \calL^{-1}[f_{3,\pm}(\cdot,x)](t)=&
 \pm \frac{1}{8\nu}
 \Bigg(\Erfc\left(\frac{|x|}{\sqrt{4\nu t}}\right)-
 e^{\pm\frac{\lambda^2 |x|}{2\nu}+\frac{\lambda^4 t}{4\nu}}
\Erfc\left(\frac{|x|}{\sqrt{4\nu t}}\pm\frac{\lambda^2\sqrt{t}}{\sqrt{4\nu}} \right)\Bigg).
\end{align*}
Therefore, the $\ud s$-integral is equal to
\begin{align*}
\int_0^t\ud s\: G_{2\nu}(s,\Delta z) H(t-s)&=
 \frac{1}{4\nu}
 e^{-\frac{\lambda^2 |\Delta z|}{2\nu}+\frac{\lambda^4 t}{4\nu}}
\Erfc\left(\frac{|\Delta z|}{\sqrt{4\nu t}}-\frac{\lambda^2\sqrt{t}}{\sqrt{4\nu}} \right)\\
&=
 \frac{1}{2\nu}
 e^{-\frac{\lambda^2 |\Delta z|}{2\nu}+\frac{\lambda^4 t}{4\nu}}
 \Phi\left(\frac{\lambda^2\sqrt{t}}{\sqrt{2\nu}}-\frac{|\Delta z|}{\sqrt{2\nu t}}\right).
\end{align*}
Finally, by \eqref{E:GG},
\begin{align*}
 J_0^2(t,x) = \iint_{\R^2}\mu(\ud z_1)\mu(\ud z_1)
 G_{\nu/2}(t,x-\bar{z})
 G_{2\nu}(t,\Delta z).
\end{align*}
This proves the formula \eqref{E:Sec}, i.e.,
\begin{equation}\label{E_:Sec}
\begin{aligned}
\Norm{u(t,x)}_2^2=\iint_{\R^2}&\mu(\ud z_1)\mu(\ud z_2)\: G_{\nu/2}(t,x-\bar{z})\\
\times & \left(
G_{2\nu}(t,\Delta z) +
\frac{\lambda^2}{2\nu}
 e^{-\frac{\lambda^2 |\Delta z|}{2\nu}+\frac{\lambda^4 t}{4\nu}}
 \Phi\left(\frac{\lambda^2\sqrt{t}}{\sqrt{2\nu}}-\frac{|\Delta z|}{\sqrt{2\nu t}}\right)
\right).
\end{aligned}
\end{equation}

{\vspace{1em}\bf\noindent Step 2.~}
Now let us consider the two-point correlation function \eqref{E:SecTP}.
Fix $t>0$ and $x_1,x_2\in\R$.
Apply \eqref{E:TP}, \eqref{E:GG} and \eqref{E_:Sec}, and then integrate over $\ud y$ using the semigroup property:
\begin{align*}
 \calI(t,x_1,t,x_2;\nu,\lambda)=& \lambda^2 \int_0^t\ud r \int_\R \ud y\: \Norm{u(r,y)}_2^2 G_{\nu/2}(t-r,\bar{x}-y)G_{2\nu}(t-r,\Delta x)
 \\
 =&
 \lambda^2 \int_0^t\ud r \int_\R \ud y \: G_{\nu/2}(t-r,\bar{x}-y)G_{2\nu}(t-r,\Delta x) \iint_{\R^2}\mu(\ud z_1)\mu(\ud z_1)
 \\&\times G_{\nu/2}(r,y-\bar{z})
  \left( G_{2\nu}(r,\Delta z) +
\frac{\lambda^2}{2\nu}
 e^{-\frac{\lambda^2 |\Delta z|}{2\nu}+\frac{\lambda^4 r}{4\nu}}
 \Phi\left(\frac{\lambda^2\sqrt{r}}{\sqrt{2\nu}}-\frac{|\Delta z|}{\sqrt{2\nu r}}\right)
\right)
 \\
 =&\lambda^2\iint_{\R^2}\mu(\ud z_1)\mu(\ud z_2) \:G_{\nu/2}(t,\bar{x}-\bar{z})
 \int_0^t \ud r
 \: G_{2\nu}(t-r,\Delta x) \widetilde{H}(r,\Delta z),
\end{align*}
where
\begin{align*}
 \widetilde{H}(t,x)&:=
G_{2\nu}(t,x) + \frac{\lambda^2}{2\nu} e^{-\frac{\lambda^2|x|}{2\nu}+\frac{\lambda^4 t}{4\nu}}\:
\Phi\left(\lambda^2 \sqrt{\frac{t}{2\nu}}-\frac{|x|}{\sqrt{2\nu t}}\right). 
\end{align*}
Here, $\widetilde{H}(t,0)=H(t)$; see \eqref{E2:H}.
Notice that
\[
\frac{1}{4\nu} e^{-\frac{\lambda^2|x|}{2\nu}+\frac{\lambda^4 t}{4\nu}}\:
\Phi\left(\lambda^2 \sqrt{\frac{t}{2\nu}}-\frac{|x|}{\sqrt{2\nu t}}\right)
=
\frac{1}{8\nu} e^{-\frac{\lambda^2|x|}{2\nu}+\frac{\lambda^4 t}{4\nu}}\:
\Erfc\left(-\lambda^2 \sqrt{\frac{t}{4\nu}}+\frac{|x|}{\sqrt{4\nu t}}\right).
\]
Hence,
\[
\widetilde{H}(t,x)= G_{2\nu}(t,x) + 2\lambda^2 \calL^{-1}[f_{1,-}(\cdot,x)](t).
\]
Together with \eqref{E:LapG}, we have that,
\begin{align*}
 \calL[\widetilde{H}(\cdot,x)](z)&=\frac{\exp\left(-\nu^{-1/2}|x|\sqrt{z}\right)}{\sqrt{4\nu z}-\lambda^2},
\end{align*}
and
\begin{align*}
 \calL[G_{2\nu}(\cdot,x)](z)
 \calL[\widetilde{H}(\cdot,x')](z)&=\frac{\exp\left(-\nu^{-1/2}(|x|+|x'|)\sqrt{z}\right)}{\sqrt{4\nu z}\left(\sqrt{4\nu z}-\lambda^2\right)}=2f_{1,-}(z,|x|+|x'|),
\end{align*}
By \eqref{E:f1}, 
\begin{align*}
 \int_0^t \ud r
 \: G_{2\nu}(t-r,\Delta x) \widetilde{H}(r,\Delta z)
 &= \frac{1}{2\nu}e^{-\frac{\lambda^2(|\Delta x|+|\Delta z|)}{2\nu}+\frac{\lambda^4t}{4\nu}}
\Phi\left(\frac{\lambda^2\sqrt{t}}{\sqrt{2\nu}}-\frac{|\Delta x|+|\Delta z|}{\sqrt{2\nu t}}\right).
\end{align*}
Therefore,
\begin{multline*}
 \E\left[u(t,x_1)u(t,x_2)\right] =J_0(t,x_1)J_0(t,x_2) + \iint_{\R^2}\mu(\ud z_1)\mu(\ud z_2) \: G_{\nu/2}(t,\bar{x}-\bar{z})
 \\
 \times \frac{\lambda^2}{2\nu}e^{-\frac{\lambda^2(|\Delta x|+|\Delta z|)}{2\nu}+\frac{\lambda^4t}{4\nu}}
\Phi\left(\frac{\lambda^2\sqrt{t}}{\sqrt{2\nu}}-\frac{|\Delta x|+|\Delta z|}{\sqrt{2\nu t}}\right).
\end{multline*}
This completes the proof of Theorem \ref{T:SecMom}.
\end{proof}

\section{Proof of Theorem \ref{T:LocB}}
\label{Sec:LocB}
\begin{proof}[Proof of Theorem \ref{T:LocB}]
%
%
The case when $a=0$ is covered in \eqref{E:LocB0}.
Let $g:\R\times\R_+\mapsto \R$ be a smooth and bounded function. Denote the joint density
of $(B_t,L_t^a)$ by $f_{a,t}(x,y)$. Then
\[
\E\left[g(B_t,L_t^a)\right]
=\int_0^\infty \ud y \int_\R \ud x \:
g(x,y) f_{a,t}(x,y).
\]
First assume that $a>0$.
By the reflection principle (see \cite[p. 107]{RevuzYor99}), the density of $T_a:=\inf\left(s\ge 0\,, B_s=a\right)$ is
\[
f_{T_a}(s)= \frac{a}{\sqrt{2\pi s^3}}\exp\left(-\frac{a^2}{2s}\right),\qquad \text{for $s\ge 0$ and $a>0$.}
\]
Let $\{\calF_t\}_{t\ge 0}$ be the standard Brownian filtration. By the strong Markov property,
\begin{align*}
\E\left[g(B_t,L_t^a)\right] &=
\E\left[\E\left(g(B_t,L_t^a)|\calF_{T_a}\right) \Indt{T_a\le t}\right]
+\E\left[\E\left(g(B_t,L_t^a)|\calF_{T_a}\right) \Indt{T_a> t}\right]\\
&=\int_0^t \ud s \:
f_{T_a}(s) E\left[g(\hat{B}_{t-s}+a, \hat{L}_{t-s}^0)\right]
+\E\left(g(B_t, 0)\Indt{T_a>t}\right)\\
&=: I_1 + I_2,
\end{align*}
where $\hat{B}_t$ is a standard Brownian motion and $\hat{L}_t^0$ is its local time at the level $0$.
We first compute $I_1$. By \eqref{E:LocB0},
\begin{align*}
I_1 =& \iint_{\R\times\R_+}\ud y\ud v \: g(y+a,v)\int_0^t \ud s \frac{a}{\sqrt{2\pi s^3}}\exp\left(-\frac{a^2}{2s}\right)
\frac{|y|+v}{\sqrt{2\pi (t-s)^3}} \exp\left(-\frac{\left(|y|+v\right)^2}{2(t-s)}\right)\\
=&
\iint_{\R\times\R_+}\ud y\ud v \: g(y+a,v)
\frac{a+|y|+v}{\sqrt{2\pi t^3}} \exp\left(-\frac{(a+|y|+v)^2}{2t}\right),
\end{align*}
where in the last step, we have used the fact that the densities $f_{T_a}$ form a
convolution semigroup, namely, $f_{T_a}*f_{T_b}=f_{T_{a+b}}$ (see \cite[p.107]{RevuzYor99} or Lemma \ref{L:IntS} below for a short proof).

As for $I_2$, let $M_t=\sup_{s\le t} B_s$. Then, by the joint law of $(B_t,M_t)$ (see, e.g., \cite[Ex. 3.14 p. 110]{RevuzYor99}),
\begin{align*}
I_2 = \E\left[g(B_t,0)\Indt{M_t<a}\right]
&=\int_{0}^a \ud m \int_{-\infty}^m \ud y \: g(y,0)
\frac{\sqrt{2}(2m-y)}{\sqrt{\pi t^3}} \exp\left(-\frac{(2m-y)^2}{2t}\right)\\
&=\int_{-\infty}^a \ud y \: g(y,0) \int_{y_+}^a \ud m
\frac{\sqrt{2}(2m-y)}{\sqrt{\pi t^3}} \exp\left(-\frac{(2m-y)^2}{2t}\right)\\
&=\int_{-\infty}^a \ud y \: g(y,0)
\frac{1}{\sqrt{2\pi t}}\left(e^{-\frac{(2y_+-y)^2}{2t}} - e^{-\frac{(2a-y)^2}{2t}}\right),
\end{align*}
where $y_+=\max(y,0)$. Note that $2y_+-y=|y|$. 
Combining these two parts, we see that 
\begin{align*}
f_a(y,v)=\Bigg[\:&\frac{a+|y-a|+v}{(2\pi t)^3} \exp\left(-\frac{(a+|y-a|+v)^2}{2t}\right)\\
& + \frac{1}{\sqrt{2\pi t}}\left(e^{-\frac{y^2}{2t}}-e^{-\frac{(2a-y)^2}{2t}}\right)\Indt{y\le a} \delta_{0}(v)\Bigg] \Indt{v\ge 0}.
\end{align*}
If $a<0$, by symmetry, let $\tilde{B}_t=-B_t$ and $\tilde{L}_t^a$ be its local time.
Clearly, the density of $(\tilde{B}_t,\tilde{L}_t^{-a})$ is $f_{-a}(y,v)$.
Therefore, the law of $(B_t,L_t^a)$ is $f_{|a|}(-y,v)$.
Then use the fact that $|-y-|a|| =|y-a|$ and $|2|a|+y|=|2a-y|$.
This completes the proof of Theorem \ref{T:LocB}.
\end{proof}

\begin{lemma}\label{L:IntS}
 For $t>0$ and $a,b\ne 0$, the following integral is true
 \[
 \int_0^t |a b| (s(t-s))^{-3/2}\exp\left(-\frac{a^2}{2s}-\frac{b^2}{2(t-s)}\right) \ud s=
 \frac{|a|+|b|}{\sqrt{2 \pi t^3}}\exp\left(-\frac{(|a|+|b|)^2}{2t}\right).
 \]
\end{lemma}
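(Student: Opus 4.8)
The identity to prove is
\[
\int_0^t |ab|\,(s(t-s))^{-3/2}\exp\left(-\frac{a^2}{2s}-\frac{b^2}{2(t-s)}\right)\ud s
= \frac{|a|+|b|}{\sqrt{2\pi t^3}}\exp\left(-\frac{(|a|+|b|)^2}{2t}\right),
\]
and the cleanest route is the probabilistic one already hinted at in the text: recognize both sides in terms of the hitting-time densities
\[
f_{T_a}(s)=\frac{|a|}{\sqrt{2\pi s^3}}\exp\left(-\frac{a^2}{2s}\right),\qquad s>0.
\]
The left-hand side is exactly $2\pi\,(f_{T_a}*f_{T_b})(t)$, while the right-hand side is $2\pi\,f_{T_{|a|+|b|}}(t)$; so the lemma is equivalent to the convolution-semigroup property $f_{T_a}*f_{T_b}=f_{T_{|a|+|b|}}$ for the stable-$(1/2)$ family, which is the statement that, for independent one-dimensional Brownian motions, the first hitting time of level $|a|$ plus an independent copy of the first hitting time of level $|b|$ has the law of the first hitting time of level $|a|+|b|$. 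This in turn is immediate from the strong Markov property: run a single Brownian motion $B$, let $T_{|a|}$ be the first time it reaches $|a|$, and note that $T_{|a|+|b|}-T_{|a|}$ is, conditionally on $\calF_{T_{|a|}}$, the first hitting time of an additional height $|b|$ by the post-$T_{|a|}$ Brownian motion, hence independent of $T_{|a|}$ with law $f_{T_{|b|}}$.

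The steps, in order: first, reduce to $a,b>0$ (the integrand depends only on $|a|,|b|$). Second, write $T_a=\inf\{s\ge0:B_s=a\}$ for a standard Brownian motion $B$ started at $0$, recall via the reflection principle that $T_a$ has density $f_{T_a}$, and observe $\{T_{a+b}\le t\}$ decomposes through $T_a$. Third, apply the strong Markov property at $T_a$: writing $\widehat B_u:=B_{T_a+u}-a$, the process $\widehat B$ is a standard Brownian motion independent of $\calF_{T_a}$, and $T_{a+b}=T_a+\widehat T_b$ where $\widehat T_b=\inf\{u\ge0:\widehat B_u=b\}$ has density $f_{T_b}$ and is independent of $T_a$. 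Fourth, equate the density of the sum $T_a+\widehat T_b$ with the convolution integral: for $t>0$,
\[
f_{T_{a+b}}(t)=\int_0^t f_{T_a}(s)\,f_{T_b}(t-s)\,\ud s
=\int_0^t \frac{a}{\sqrt{2\pi s^3}}e^{-a^2/(2s)}\,\frac{b}{\sqrt{2\pi (t-s)^3}}e^{-b^2/(2(t-s))}\,\ud s,
\]
and multiply both sides by $2\pi$ to recover precisely the claimed identity, with the right-hand side $\frac{a+b}{\sqrt{2\pi t^3}}e^{-(a+b)^2/(2t)}$.

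Alternatively — and this is worth mentioning as the "elementary" backup in case one wants to avoid invoking probability at all — one can prove the identity by a direct substitution. In the integral, the substitution $s = t u/(1+u)$ (equivalently $u=s/(t-s)\in(0,\infty)$) turns $s(t-s)=t^2u/(1+u)^2$ and $\ud s = t(1+u)^{-2}\ud u$, and the exponent becomes $-\frac{1}{2t}\left(a^2(1+u)/u + b^2(1+u)\right)$; completing the square in the variable $\sqrt u$ after pulling out $e^{-(a^2+b^2)/(2t)}$ leaves a Gaussian integral in $v=\sqrt u$ over $(0,\infty)$ of the form $\int_0^\infty (\cdots)\exp(-\frac{1}{2t}(a/v - bv)^2 - ab/t)\,\ud v$, which evaluates by the standard trick $\int_0^\infty e^{-(\alpha/v-\beta v)^2}\,\ud v$-type formula (splitting the linear-in-$v$ piece into an exact differential) to give the right-hand side.

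The main obstacle is essentially bookkeeping rather than depth: in the probabilistic proof one must be careful that the strong Markov decomposition is applied on the event $\{T_a<\infty\}$ (which has probability one here, $T_a<\infty$ a.s.) and that $T_{a+b}=T_a+\widehat T_b$ holds pathwise because $a,b>0$ so that level $a$ is reached strictly before level $a+b$; in the analytic proof the only delicate point is correctly splitting off the total-derivative term when evaluating the resulting one-sided Gaussian integral. Neither is serious, so the lemma follows.
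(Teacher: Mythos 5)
Your probabilistic route---recognizing the integrand as a product of first-passage densities, so that the integral is $2\pi$ times the convolution $f_{T_{|a|}}*f_{T_{|b|}}$, and then deducing the semigroup identity $f_{T_a}*f_{T_b}=f_{T_{a+b}}$ from the strong Markov property at $T_{|a|}$---is genuinely different from the paper's proof, which is a two-line Laplace-transform computation: with $g_a(t)=|a|\,t^{-3/2}e^{-a^2/(2t)}$ one looks up $\calL[g_a](z)=\sqrt{2\pi}\,e^{-\sqrt{2}|a|\sqrt{z}}$, multiplies the transforms, and inverts using the same table entry. Your argument is more illuminating, since it explains why the stable-$(1/2)$ densities reproduce under convolution; but note that in the proof of Theorem \ref{T:LocB} the authors cite this lemma precisely as a self-contained alternative to quoting the semigroup property from Revuz--Yor, so a proof that simply invokes that property somewhat defeats the lemma's purpose in the paper, even though it is logically unobjectionable as a proof of the integral identity. (Your ``elementary'' substitution backup is only sketched and not carried out, so I set it aside.)

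There is, however, a constant you have mismatched, and it happens to expose a typo in the lemma itself. You assert that the right-hand side of the lemma equals $2\pi\,f_{T_{|a|+|b|}}(t)$; since $f_{T_c}(t)=\frac{c}{\sqrt{2\pi t^3}}e^{-c^2/(2t)}$, the stated right-hand side is exactly $f_{T_{|a|+|b|}}(t)$, with no factor $2\pi$. Your computation correctly gives
\[
\int_0^t |ab|\,(s(t-s))^{-3/2}\exp\left(-\frac{a^2}{2s}-\frac{b^2}{2(t-s)}\right)\ud s
=2\pi\,(f_{T_{|a|}}*f_{T_{|b|}})(t)=2\pi\,f_{T_{|a|+|b|}}(t)
=\frac{\sqrt{2\pi}\,(|a|+|b|)}{t^{3/2}}\exp\left(-\frac{(|a|+|b|)^2}{2t}\right),
\]
which is $2\pi$ times the right-hand side printed in the lemma. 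The same discrepancy surfaces if one finishes the paper's own inversion: $\calL[I](z)=2\pi e^{-\sqrt{2}(|a|+|b|)\sqrt{z}}=\sqrt{2\pi}\,\calL\left[g_{|a|+|b|}\right](z)$, so $I(t)=\sqrt{2\pi}\,g_{|a|+|b|}(t)$, not $(2\pi)^{-1/2}g_{|a|+|b|}(t)$. So the identity as stated is off by a factor of $2\pi$; the correctly normalized fact, and the one actually used in evaluating $I_1$ in the proof of Theorem \ref{T:LocB}, is $f_{T_a}*f_{T_b}=f_{T_{a+b}}$. Your closing step ``multiply both sides by $2\pi$ to recover precisely the claimed identity'' is therefore not correct as written: multiplying the convolution identity by $2\pi$ turns its left side into $2\pi f_{T_{a+b}}(t)$, not into the lemma's right-hand side. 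You should restate the lemma with the correct constant (or flag the typo) rather than silently absorb the factor.
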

\begin{proof}
Denote the integral by $I(t)$ and let $g_a(t)=|a|\: t^{-3/2}e^{a^2/(2t)}$.
By \cite[(28) on p. 146]{Erdelyi1954-I},
\begin{align}\label{E:Lap}
 \calL\left[g_a\right](z) = \sqrt{2\pi}e^{-\sqrt{2}|a|\sqrt{z}}\:.
\end{align}
So, $\calL[I](z) = \calL\left[g_a\right](z)\calL\left[g_b\right](z)
=2 \pi e^{-\sqrt{2}\left(|a|+|b|\right)\sqrt{z}}$.
Then use \eqref{E:Lap} for the inversion.
\end{proof}

\section{An alternative proof of Theorem \ref{T:SecMom}}
\label{Sec:FeynmanKac}
If the initial data $\mu$ is such that $\mu(\ud x)=u_0(x)\ud x$,
where $u_0$ is a bounded measurable function,
then we can use the Feynman-Kac representation \eqref{E:FKF} with $n=2$ and Theorem \ref{T:LocB}
to give an alternative proof of Theorem \ref{T:SecMom}.

\begin{proof}[An alternative proof of Theorem \ref{T:SecMom}]
For initial data specified above, $\E\left(u(t,x_1)u(t,x_2)\right)$
admits the Feynman-Kac representation \eqref{E:FKF}.
Let
\[
W_{2\nu t}^1:=B_{\nu t}^1-B_{\nu t}^2\quad\text{and}\quad
W_{2\nu t}^2:=B_{\nu t}^1+B_{\nu t}^2.
\]
Note that $W_t^1$ and $W_t^2$ are two independent standard Brownian motions.
Then,
\begin{align*}
 \E\left(u(t,x_1)u(t,x_2)\right)&=
\E\Bigg(
u_0\left(x_1+2^{-1} [W_{2\nu t}^1+W_{2\nu t}^2]\right)
u_0\left(x_2+2^{-1} [W_{2\nu t}^2-W_{2\nu t}^1]\right)\\
&\qquad\qquad\times \exp\left(\lambda^2\int_0^t\delta_{x_2-x_1}\left(W_{2\nu s}^1\right)\ud s\right)\Bigg)\\
&=\int_\R \ud z \: G_{2\nu}(t,z)
\E\Bigg(
u_0\left(x_1+2^{-1} [W_{2\nu t}^1+z]\right)
u_0\left(x_2+2^{-1} [z-W_{2\nu t}^1]\right)\\
&\qquad\qquad\times
\exp\left(\lambda^2\int_0^t\delta_{x_2-x_1}\left(W_{2\nu s}^1\right)\ud s\right)\Bigg).
\end{align*}
Notice that
\[
\exp\left(\lambda^2\int_0^t\delta_{x_2-x_1}\left(W_{2\nu s}^1\right)\ud s\right)
=
\exp\left(\frac{\lambda^2}{2\nu}\int_0^{2\nu t}\delta_{x_2-x_1}\left(W_{s}^1\right)\ud s\right)
=\exp\left(\frac{\lambda^2}{2\nu} L_{2\nu t}^{x_2-x_1} \right),
\]
where $L_t^a$ be the local time of $W_t^1$ at the level $a$. Then
the expectation in the above integrand becomes
\begin{align}\label{E:YV}
\E\Bigg(
u_0\left(x_1+2^{-1} [W_{2\nu t}^1+z]\right)
u_0\left(x_2+2^{-1} [z-W_{2\nu t}^1]\right)
\exp\left(\frac{\lambda^2}{2\nu} L_{2\nu t}^{x_2-x_1} \right)\Bigg).
\end{align}
%
By Theorem \ref{T:LocB}, this expectation is equal to
\begin{align} \notag
\int_\R \ud y\:
& u_0\left(x_1+\frac{z+y}{2}\right)
u_0\left(x_2+\frac{z-y}{2}\right)
 \\ \label{E_:E}
\times& \int_0^\infty \ud v \:
\frac{|y-\Delta x|+|\Delta x|+v}{4\sqrt{\pi \nu^3  t^3}}\exp\left(-\frac{\left(|y-\Delta x|+v+|\Delta x| \right)^2}{4\nu t}+\frac{\lambda^2 v}{2\nu}\right)\\
+&
\int_{\sign(\Delta x) y\le |\Delta x|} \ud y\:
u_0(x_1+(z+y)/2)
u_0(x_2+(z-y)/2)
\left[G_{2\nu}(t,y)-G_{2\nu}(t,2\Delta x-y)\right].
\notag
\end{align}
By integration by parts, the $\ud v$-integration in \eqref{E_:E} is equal to
\begin{align*}
&\left.\frac{1}{\sqrt{4\pi\nu t}} e^{\frac{\lambda^2 v}{2\nu}}e^{-\frac{(|y-\Delta x|+ |\Delta x| +v)^2}{4\nu t}}
\right|_{v=\infty}^{v=0} + \frac{\lambda^2}{2\nu}\int_0^\infty \ud v \:
\frac{1}{\sqrt{4\pi \nu t}}e^{\frac{\lambda^2 v}{2\nu}}e^{-\frac{(|y-\Delta x|+ |\Delta x| +v)^2}{4\nu t}}\\
=&
G_{2\nu}(t,|y-\Delta x|+ |\Delta x| )
+ \frac{\lambda^2}{2\nu} e^{\frac{\lambda^4 t}{4\nu} -\frac{\lambda^2 (|y-\Delta x|+ |\Delta x| )}{2\nu}}
\Phi\left(\lambda^2\sqrt{\frac{t}{2\nu}}-\frac{|y-\Delta x|+ |\Delta x| }{\sqrt{2\nu t}}\right).
\end{align*}
Denote the $\ud y$-integral in \eqref{E_:E} by $I$.
By symmetry, we will only consider the case where $\Delta x>0$.
In this case, the $\ud y$-integral is from $-\infty$ to $\Delta x$.
By the change of variables $y'=2\Delta x -y$,  
\begin{equation}
\label{E_:II}
\begin{aligned}
\int_{-\infty}^{\Delta x} \ud y\:&
u_0(x_1+(z+y)/2)
u_0(x_2+(z-y)/2)
G_{2\nu}(t,2\Delta x-y)\\
=&\int_{\Delta x}^\infty\ud y'\:
u_0(x_1+(z+y')/2)
u_0(x_2+(z-y')/2)
G_{2\nu}(t,y').
\end{aligned}
\end{equation}
Hence,
\[
I=\int_{\R} \ud y\:
u_0(x_1+(z+y)/2)
u_0(x_2+(z-y)/2)
\left[
G_{2\nu}(t,y) \Indt{y\le \Delta x}-
G_{2\nu}(t,y) \Indt{y> \Delta x}
\right].
\]
Notice that
\begin{align*}
&\:\: G_{2\nu}(t,y) \Indt{y\le \Delta x}-
G_{2\nu}(t,y) \Indt{y> \Delta x} + G_{2\nu}(t,|y-\Delta x| + \Delta x)\\
=&
\left[G_{2\nu}(t,y) + G_{2\nu}(t,2\Delta x -y) \right]\Indt{y\le \Delta x}.
\end{align*}
Therefore, by \eqref{E_:II}, we know that
\begin{align*}
\int_{\R} \ud y\: &
u_0(x_1+(z+y)/2)
u_0(x_2+(z-y)/2) \\
&\times \Big(
 G_{2\nu}(t,y) \Indt{y\le \Delta x}-
G_{2\nu}(t,y) \Indt{y> \Delta x} + G_{2\nu}(t,|y-\Delta x| + \Delta x)\Big)
\\
&=
\int_{\R} \ud y\: 
u_0(x_1+(z+y)/2)
u_0(x_2+(z-y)/2) G_{2\nu}(t,y).
\end{align*}
Combining these calculations, we have that
\begin{align*}
\E\left(u(t,x_1)u(t,x_2)\right) = &
\iint_{\R^2}\ud z \ud y \: u_0(x_1+(z+y)/2) u_0(x_2+(z-y)/2) G_{2\nu}(t,z)\\
& \times
\left(G_{2\nu}(t,y)+\frac{\lambda^2}{2\nu} e^{\frac{\lambda^4 t}{4\nu} -\frac{\lambda^2 (|y-\Delta x|+ |\Delta x| )}{2\nu}}
\Phi\left(\lambda^2\sqrt{\frac{t}{2\nu}}-\frac{|y-\Delta x|+ |\Delta x| }{\sqrt{2\nu t}}\right)\right).
\end{align*}
Finally, by change of variables $z_1=x_1+(z+y)/2$ and $z_2=x_2+(z-y)/2$, we have that
\begin{align*}
\E\left(u(t,x_1)u(t,x_2)\right) = &
\iint_{\R^2}\ud z_1 \ud z_2 \: u_0(z_1) u_0(z_2) 2 G_{2\nu}(t,(z_1+z_2)- (x_1+x_2))\\
&\times
\Bigg(G_{2\nu}\left(t,\Delta z-\Delta x\right)
\\
&+ \frac{\lambda^2}{2\nu} \exp\left(\frac{\lambda^4 t}{4\nu} -\frac{\lambda^2 |z_1-z_2|+ |\Delta x|}{2\nu} \right)
\Phi\left(\lambda^2\sqrt{\frac{t}{2\nu}}-\frac{|z_1-z_2|+ |\Delta x| }{\sqrt{2\nu t}}\right)\Bigg).
\end{align*}
Notice that $2 G_{2\nu}(t,(z_1+z_2)- (x_1+x_2))=G_{\nu/2}(t,\bar{x}-\bar{z})$.
This proves \eqref{E:SecTP} (see \eqref{E:Sec_Inner2} and \eqref{E:K*}).
\end{proof}
\bigskip

\section{Feynman-Kac formula for measure-valued initial data}

\subsection{Initial data part (Proof of Theorem \ref{T:mun})} \label{SS:mun}
We need some lemmas. 
\begin{lemma}\label{L:p's G}
For any $t>0$, $s>0$, $x\in\R$, $y_i\in\R$, $i=1,\dots,p$, it holds that
\[
\int_\R G_1(s,x+z) \prod_{j=1}^p G_1(t,z-y_j)\ud z
\le
(p+1)^{p/2} \sqrt{\frac{t}{ps+t}}\: e^{\frac{px^2}{2(ps+t)}} \prod_{i=1}^p G_1((p+1)t,y_i).
\]
\end{lemma}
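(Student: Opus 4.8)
The plan is to evaluate the integral on the left in closed form using Gaussian calculus, and then reduce the claimed bound to a single scalar inequality between exponents.

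\textbf{Step 1 (collapse the product).} I would first combine the $p$ factors $\prod_{j=1}^p G_1(t,z-y_j)$ into one Gaussian by completing the square in $z$. Setting $\bar y:=\frac1p\sum_{j=1}^p y_j$ and $Q:=\sum_{j=1}^p(y_j-\bar y)^2\ge 0$, one has the algebraic identity $\sum_{j=1}^p(z-y_j)^2=p(z-\bar y)^2+Q$, whence
\[
\prod_{j=1}^p G_1(t,z-y_j)=\frac{e^{-Q/(2t)}}{(2\pi t)^{(p-1)/2}\sqrt p}\;G_1\!\left(\frac tp,\,z-\bar y\right).
\]

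\textbf{Step 2 (integrate in $z$).} Using that $G_1$ is even together with the semigroup property $G_1(s,\cdot)*G_1(r,\cdot)=G_1(s+r,\cdot)$ (equivalently, a one-line Gaussian integral), one obtains $\int_\R G_1(s,x+z)\,G_1(t/p,z-\bar y)\,\ud z=G_1(s+\tfrac tp,\,x+\bar y)$. Since $s+t/p=(ps+t)/p$, combining with Step 1 gives the exact formula
\[
\int_\R G_1(s,x+z)\prod_{j=1}^p G_1(t,z-y_j)\,\ud z=\frac{1}{(2\pi t)^{p/2}}\sqrt{\frac{t}{ps+t}}\;\exp\!\left(-\frac{Q}{2t}-\frac{p(x+\bar y)^2}{2(ps+t)}\right).
\]

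\textbf{Step 3 (reduce to a scalar inequality).} On the right-hand side of the lemma, $\prod_{i=1}^p G_1((p+1)t,y_i)=(2\pi(p+1)t)^{-p/2}\exp\!\big(-\tfrac{1}{2(p+1)t}\sum_{i=1}^p y_i^2\big)$, and $\sum_{i=1}^p y_i^2=p\bar y^2+Q$. The factor $(p+1)^{p/2}$ cancels the $(p+1)^{-p/2}$ hidden in the normalising constant, and $\sqrt{t/(ps+t)}$ cancels as well; so, after taking logarithms, the asserted bound is equivalent to
\[
-\frac{Q}{2t}-\frac{p(x+\bar y)^2}{2(ps+t)}\;\le\;\frac{px^2}{2(ps+t)}-\frac{p\bar y^2+Q}{2(p+1)t}.
\]
I would establish this by expanding $(x+\bar y)^2$, completing the square in $x$ to absorb the cross term $x\bar y$, and then invoking $Q\ge 0$ together with an elementary pointwise estimate such as $x^2+(x+\bar y)^2\ge\bar y^2/2$; the term $Q$ should be kept (not simply discarded) on the left so that it can soak up part of what is lost in bounding the cross term.

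\textbf{Where the difficulty lies.} Steps 1 and 2 are routine Gaussian calculus. The entire content is the scalar inequality in Step 3: one must track the interplay of the parameters $s$, $t$, $p$ carefully and choose the auxiliary splitting sharply enough to land on the stated constant $(p+1)t$ — a blunt application of Young's inequality is too lossy, which is why the $Q$-term and the square-completion in $x$ have to be used in tandem.
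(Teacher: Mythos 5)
Your Steps 1 and 2 reproduce exactly the computation in the paper's own proof, and they are correct: the left-hand side equals
\[
(2\pi)^{-p/2}\,t^{-(p-1)/2}\,(ps+t)^{-1/2}\exp\left(-\frac{p(x+\bar y)^2}{2(ps+t)}-\frac{Q}{2t}\right),
\]
and your Step 3 is a faithful, equivalent reformulation of the lemma as a scalar exponent inequality. The gap is that Step 3 is only a plan, and the plan cannot be carried out, because the scalar inequality you have written down is false in general. Take $Q=0$ (all $y_i$ equal to some $\bar y\neq 0$) and $x=0$: your displayed inequality reduces to $-\frac{p\bar y^2}{2(ps+t)}\le -\frac{p\bar y^2}{2(p+1)t}$, i.e.\ $(p+1)t\ge ps+t$, i.e.\ $s\le t$. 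So for $s>t$ no choice of square-completion or use of $Q\ge 0$ can close the argument; indeed the lemma itself fails as stated in that regime (at $p=1$, $s=10$, $t=1$, $x=0$, $y_1=10$ the left side is $G_1(11,10)\approx 1.3\times 10^{-3}$ while the right side is about $1.7\times 10^{-12}$).

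You are in good company: the paper's proof follows the identical route and founders at the corresponding spot. After using $(x+\bar y)^2\ge \bar y^2/2-x^2$ and $p\bar y^2\le y_1^2+\dots+y_p^2$ it reaches the exponent coefficient $\frac{t+2ps}{4t(ps+t)}-\frac{1}{2t}=-\frac{1}{4(ps+t)}$ and then asserts this is $\le -\frac{1}{2(p+1)t}$, which again requires $2ps\le (p-1)t$. A correct completion must either add a hypothesis of this type, or weaken the conclusion by letting the output Gaussians widen with $s$: from the exact formula, $(x+\bar y)^2\ge \bar y^2/2-x^2$ and $\frac{1}{2t}\ge\frac{1}{4(ps+t)}$ give the exponent bound $\frac{px^2}{2(ps+t)}-\frac{p\bar y^2+Q}{4(ps+t)}=\frac{px^2}{2(ps+t)}-\frac{\sum_i y_i^2}{4(ps+t)}$, which yields a valid estimate with $\prod_i G_1(2(ps+t),y_i)$ in place of $\prod_i G_1((p+1)t,y_i)$ (and a correspondingly adjusted constant). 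Since $\mu_i\in\calM_H(\R)$ integrates every Gaussian, this weaker form still suffices for the applications in Lemma \ref{L:Ttn} and Theorem \ref{T:mun}, but the statement as printed, and your Step 3 target, are both unprovable.
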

\begin{proof}
Denote $\bar{y}=(y_{1}+\dots+y_{p})/p$.
Notice that
\begin{align*}
\prod_{j=1}^p G_1(t,z-y_{j})&
=(2\pi t)^{-p/2}\exp\left(-\frac{(z-\bar{y})^2}{2t/p}\right)
\exp\left(-\frac{y_{1}^2+\dots+y_{p}^2 -p\:\bar{y}^2}{2t}\right)
\\
&=(2\pi t)^{-(p-1)/2}p^{-1/2} G_1(t/p,z-\bar{y}) 
\exp\left(-\frac{y_{1}^2+\dots+y_{p}^2 -p\:\bar{y}^2}{2t}\right).
\end{align*}
Hence, by the semigroup property of the heat kernel, the $\ud z$ integral is equal to
\begin{align*}
\int_{\R} \ud z\: & G_1(s,x+z)\prod_{j=1}^p G_1(t,z-y_{j}) 
\\=& 
(2\pi t)^{-(p-1)/2}p^{-1/2} G_1(s+t/p,x+\bar{y})
\exp\left(-\frac{y_{1}^2+\dots+y_{p}^2 -p\: \bar{y}^2}{2t}\right)
\\
=&
(2\pi)^{-p/2} t^{-(p-1)/2} (p s+t)^{-1/2} 
\exp\left(-\frac{p(x+\bar{y})^2}{2(ps+t)}-\frac{y_{1}^2+\dots+y_{p}^2 -p\: \bar{y}^2}{2t}\right)\\
\le &
(2\pi)^{-p/2} t^{-(p-1)/2} (p s+t)^{-1/2} 
\exp\left(-\frac{p\bar{y}^2 - 2px^2}{4(ps+t)}-\frac{y_{1}^2+\dots+y_{p}^2 -p\: \bar{y}^2}{2t}\right)\\
= &
(2\pi)^{-p/2} t^{-(p-1)/2} (p s+t)^{-1/2} e^{\frac{px^2}{2(ps+t)}}
\exp\left(\frac{ p \bar{y}^2(t+2ps)}{4t(ps+t)}-\frac{y_{1}^2+\dots+y_{p}^2}{2t}\right)\\
\le &
(2\pi)^{-p/2} t^{-(p-1)/2} (p s+t)^{-1/2} e^{\frac{px^2}{2(ps+t)}}
\exp\left(\left(\frac{ t+2ps}{4t(ps+t)}-\frac{1}{2t}\right)(y_{1}^2+\dots+y_{p}^2)\right)\\
\le &
(2\pi)^{-p/2} t^{-(p-1)/2} (p s+t)^{-1/2} e^{\frac{px^2}{2(ps+t)}}
\exp\left(-\frac{y_{1}^2+\dots+y_{p}^2}{2(p+1)t}\right)\\
=&
\sqrt{\frac{t}{ps+t}} (p+1)^{p/2} e^{\frac{px^2}{2(ps+t)}} \prod_{i=1}^p G_1((p+1)t,y_i).
\end{align*}
This proves Lemma \ref{L:p's G}.
\end{proof}

Let $T_t$ be the Ornstein Uhlenbeck semigroup of operators on $L^2(\Omega)$ with generator $L$.
\begin{lemma} \label{L:Ttn}
For any $\mu_i\in \calM_H(\R)$, $h_i\in H$, and $x_i\in\R$, $i=1,\dots,n$, it holds that
\[
T_t\left(\prod_{i=1}^n \mu_i\left(x_i+W_i(h_i)\right)\right)
=\prod_{i=1}^n \left[\mu_i*G_1(|h_i|^2(1-e^{-2t}),\cdot)\right](x_i+e^{-t} W_i(h_i)),
\]
for $t>0$,
where $W_i$ are i.i.d. zero mean Gaussian processes $W_i=\{W_i(h), h\in H\}$ with covariance function 
$\E(W_i(h)W_i(g))=\InPrd{h,g}$.
\end{lemma}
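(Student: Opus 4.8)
The engine is Mehler's formula for the Ornstein--Uhlenbeck semigroup: if $W'=(W'_1,\dots,W'_n)$ is an independent copy of $W=(W_1,\dots,W_n)$ and $\E'$ denotes expectation in $W'$ alone, then $T_tF=\E'\big[F(e^{-t}W+\sqrt{1-e^{-2t}}\,W')\big]$ whenever $F\in L^1(\Omega)$. The plan is to (i) establish the identity when $F:=\prod_{i=1}^n\mu_i(x_i+W_i(h_i))$ is a genuine $L^1(\Omega)$ random variable, and (ii) reach the general case by smoothing the $\mu_i$.

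For (i) I would substitute $F$ into Mehler's formula. Because $W'_1,\dots,W'_n$ are mutually independent and independent of $W$, conditioning on $W$ makes the factors independent and $\E'$ splits as a product, so
\[
T_tF=\prod_{i=1}^n\E'\Big[\mu_i\big(x_i+e^{-t}W_i(h_i)+\sqrt{1-e^{-2t}}\,W'_i(h_i)\big)\Big].
\]
Conditionally on $W$ the variable $\sqrt{1-e^{-2t}}\,W'_i(h_i)$ is centered Gaussian with variance $|h_i|^2(1-e^{-2t})$, and its density is exactly $G_1\big(|h_i|^2(1-e^{-2t}),\cdot\big)$; integrating against it converts the $i$-th factor into $\big(\mu_i*G_1(|h_i|^2(1-e^{-2t}),\cdot)\big)(x_i+e^{-t}W_i(h_i))$, which is the asserted form.

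For (ii) I would regularize: put $\mu_i^\epsilon:=\mu_i*G_1(\epsilon,\cdot)$. The defining property of $\calM_H(\R)$ gives $\mu_i^\epsilon(y)\le C_{\epsilon,\delta}\,e^{\delta y^2}$ for every $\delta>0$, so $F^\epsilon:=\prod_i\mu_i^\epsilon(x_i+W_i(h_i))\in L^p(\Omega)$ for all $p\ge1$; applying (i) to $F^\epsilon$ and using the semigroup property $G_1(\epsilon,\cdot)*G_1(r,\cdot)=G_1(\epsilon+r,\cdot)$ gives
\[
T_tF^\epsilon=\prod_{i=1}^n\big(\mu_i*G_1(\epsilon+|h_i|^2(1-e^{-2t}),\cdot)\big)(x_i+e^{-t}W_i(h_i)).
\]
Letting $\epsilon\downarrow0$, the right side converges pointwise in $\omega$ (since $\mu_i*G_1(r,\cdot)$ is continuous with sub-$e^{\delta y^2}$ growth, convolving it once more with $G_1(\epsilon,\cdot)$ returns it in the limit), and the factors are dominated uniformly in $\epsilon$ by $\prod_iC_i(\delta)\,e^{\delta(x_i+e^{-t}W_i(h_i))^2}$, which lies in every $L^q(\Omega)$ for $\delta$ small; hence the convergence takes place in $L^p(\Omega)$ and identifies $\lim_{\epsilon\to0}T_tF^\epsilon$ with the claimed expression.

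The delicate point is the logical placement rather than any single estimate: at the stage where this lemma is used, $F$ is not yet known to belong to any $\D^{-\alpha,p}(\R)$ — that is Theorem~\ref{T:mun}, whose proof invokes the lemma — so the argument must be arranged so that it is the $\epsilon$-regularized identity for $T_tF^\epsilon$ that is proved outright, the $\epsilon$-uniform $L^p$ control of its right side (via Lemma~\ref{L:p's G}) being precisely what the estimate $\|F^\epsilon\|_{-\alpha,p}\le C\int_0^\infty t^{\alpha/2-1}e^{-t}\|T_tF^\epsilon\|_p\,\ud t$ in Theorem~\ref{T:mun} then consumes; the clean statement for $\mu_i\in\calM_H(\R)$ follows a posteriori once $F^\epsilon\to F$ in $\D^{-\alpha,p}(\R)$ and $T_t$ is continuous there. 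Apart from this, the only non-routine step is the factorization of $\E'$ in Mehler's formula, which is exactly where the mutual independence of $W'_1,\dots,W'_n$ enters.
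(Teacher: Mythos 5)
Your proof follows essentially the same route as the paper's: regularize each $\mu_i$ by convolving with $G_1(\epsilon,\cdot)$, verify integrability of the smoothed product (the paper does this via Lemma \ref{L:p's G}), apply Mehler's formula with the expectation factorizing over the independent copies $W_i'$, use the Gaussian density of $\sqrt{1-e^{-2t}}\,W_i'(h_i)$ and the heat-kernel semigroup property, and then let $\epsilon\downarrow 0$. Your closing remark about the logical placement is a fair observation — the paper's final limiting step is stated tersely as an $L^2(\Omega)$-limit even though $\prod_i\mu_i(x_i+W_i(h_i))$ is only a Wiener distribution in general — and your more careful domination argument for the limit is a sound way to make that step rigorous, but the underlying argument is the same.
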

\begin{proof}
Fix $\epsilon>0$. Let $\mu_{i,\epsilon}(x) = (\mu_i*G_1(\epsilon,\cdot))(x)$.
By H\"older's inequality,
\[
\E\left[\prod_{i=1}^n\mu_{i,\epsilon}(W_i(h_i)+x_i)^2\right]
\le \prod_{i=1}^n \E\left[\mu_{i,\epsilon}(W_i(h_i)+x_i)^{2n}\right]^{1/n}.
\]
Notice that for all $p\ge 1$,
\begin{align*}
\E\left[|\mu_{i,\epsilon}(W_i(h_i)+x_i)|^{p}\right] 
 &=\int_\R \ud z\: G_1(|h_i|^2,x+z) \left|\int_\R G_1(\epsilon,z-y) \mu_i(\ud y)\right|^p\\
 &\le 
\int_{\R^p} |\mu_i|(\ud y_1)\dots |\mu_i|(\ud y_p) \int_\R \ud z
\: G_1(|h_i|^2,x+z) \prod_{j=1}^p G_1(\epsilon,z-y_j).
\end{align*}
By Lemma \ref{L:p's G} with $t=\epsilon$ and $s=|h_i|^2$,
\[
\E\left[|\mu_{i,\epsilon}(W_i(h_i)+x_i)|^{p}\right]\le
(p+1)^{p/2} e^{\frac{p x^2}{2(p|h_i|^2 +\epsilon)}} \left[(|\mu_i|*G_1(\epsilon(p+1),\cdot))(0)\right]^p,
\]
which is finite because $\mu_i\in\calM_H(\R)$.
Hence, $\prod_{i=1}^n\mu_{i,\epsilon}(W_i(h_i)+x_i)\in L^2(\Omega)$,
and one can apply Mehler's formula (see, e.g., \cite[Section 1.4.1]{Nualart06} to obtain that 
\begin{align}
\notag
T_t\left( \prod_{i=1}^n \mu_\epsilon \left(x_i+W_i(h_i)\right)\right)
&=\E'\left[
\prod_{i=1}^p
\mu_{i,\epsilon}\left(x_i+e^{-t}W_i(h_i) + \sqrt{1-e^{-2t}}\: W_i'(h_i)\right)
\right]
\\
\notag
&=
\int_{\R^n} \prod_{i=1}^n \mu_{i,\epsilon}\left(x_i+e^{-t}W_i(h_i) +y_i\right)  G_1\left(|h_i|^2(1-e^{-2t}),y_i\right)\ud y_i
\\
\notag
&=\prod_{i=1}^n\left[\mu_{i,\epsilon} *G_1(|h_i|^2(1-e^{-2t}),\cdot)\right](x+e^{-t} W_i(h_i))
\\
&=
\prod_{i=1}^n\left[\mu_i *G_1(\epsilon+|h_i|^2(1-e^{-2t}),\cdot)\right](x_i+e^{-t} W_i(h_i)).
\label{E:Step2Tt}
\end{align}
%
Finally, $T_t\left(\prod_{i=1}^n \mu_i\left(x_i+W_i(h_i)\right)\right)$ is the $L^2(\Omega)$-limit
of $T_t\left( \prod_{i=1}^n\mu_{i,\epsilon} \left(x_i+W_i(h_i)\right)\right)$,
and this limit can be obtained by sending $\epsilon$ to zero in \eqref{E:Step2Tt}.
This proves Lemma \ref{L:Ttn}.
\end{proof}
\bigskip
\begin{proof}[Proof of Theorem \ref{T:mun}]
Without loss of generality, assume that $\mu_i\ge 0$. Following \cite{NualartVives92}, we write $(I-L)^{-\alpha/2}$ in the following form
\[
(I-L)^{-\alpha/2} = \Gamma(\alpha/2)^{-1} \int_0^\infty e^{-t}t^{\alpha/2-1}T_t \ud t.
\]
By Lemma \ref{L:Ttn},
\begin{align}
\notag
&\Norm{(I-L)^{-\alpha/2} \prod_{i=1}^n \mu(W_i(h_i)+x_i)}_p\\
\notag
&= \Norm{\Gamma(\alpha/2)^{-1}\int_0^\infty e^{-t} t^{\alpha/2-1}
\prod_{i=1}^n \left[\mu_i*G_1(|h_i|^2(1-e^{-2t}),\cdot)\right](e^{-t}W_i(h_i)+x_i))\ud t }_p\\
&\le \Gamma(\alpha/2)^{-1}
\int_0^\infty e^{-t} t^{\alpha/2-1}\Norm{\prod_{i=1}^n \left[\mu_i*G_1(|h_i|^2(1-e^{-2t}),\cdot)\right](e^{-t}W_i(h_i)+x_i))}_p\ud t.
\label{E:N>I-L}
\end{align}
Now
\begin{align*}
&\Norm{\prod_{i=1}^n\left[\mu_i*G_1(|h_i|^2(1-e^{-2t}),\cdot)\right](e^{-t}W_i(h_i)+x_i))}_p^p 
 \\
=&\E\left(\left|\prod_{i=1}^n \int_\R \frac{e^{-\frac{(x_i+e^{-t}W_i(h_i)-y)^2}{2|h_i|^2(1-e^{2t})}}}{\sqrt{2\pi |h_i|^2(1-e^{-2t})}} \mu_i(\ud y)\right|^p\right)
\\
=&
\int_{\R^n} \ud z_1\dots\ud z_n \left(\prod_{i=1}^n G_1(S_i,x_i+z_i)\right)
\int_{\R^{np}} \prod_{j=1}^p \prod_{i=1}^n G_1(T_i,z_i-y_{ij}) \mu_i(\ud y_{ij})\\
=&
\int_{\R^{np}} \left(\prod_{j=1}^p \prod_{i=1}^n \mu_i(\ud y_{ij})\right)
\prod_{i=1}^n \int_{\R} \ud z_i \: G_1(S_i,x_i+z_i)
\prod_{j=1}^p G_1(T_i,z_i-y_{ij})
\end{align*}
where
\[
T_i=|h_i|^2(1-e^{-2t}) \quad\text{and}\quad 
S_i= e^{-2t} |h_i|^2.
\]
By Lemma \ref{L:p's G}, the $\ud z_i$ integral is bounded by
\begin{align*}
\int_{\R} \ud z_i\:  G_1(S_i,x_i+z_i)\prod_{j=1}^p G_1(T_i,z_i-y_{ij})
\le &
(p+1)^{p/2}\sqrt{\frac{T_i}{pS_i+T_i}} \: e^{\frac{px_i^2}{2(pS_i+T_i)}} \prod_{j=1}^p G_1((p+1)T_i,y_{ij})
\\
\le&T_i^{(1-p)/2} \frac{|h_i|^{p}}{\sqrt{pS_i+T_i}}e^{\frac{px_i^2}{2(pS_i+T_i)}}
\prod_{j=1}^p G_1(|h_i|^2(p+1), y_{ij}),
\end{align*}
where we have applied the inequality
\[
G_1((p+1)T_i,y_{ij})\le \frac{|h_i|}{\sqrt{T_i}}G_1((p+1)|h_i|^2,y_{ij}).
\]
Hence, 
\[
 \Norm{\prod_{i=1}^n\left[\mu_i*G_1(|h_i|^2(1-e^{-2t}),\cdot)\right](e^{-t}W_i(h_i)+x_i))}_p 
 \le 
 \prod_{i=1}^n
 \frac{T_i^{\frac{1}{2p}-\frac{1}{2}}|h_i|e^{\frac{x_i^2}{2(pS_i+T_i)}}}{(pS_i+T_i)^{1/(2p)} } J_0((p+1)|h_i|^2,0).
\]
By substituting the above upper bound into \eqref{E:N>I-L},
we see that the integral in \eqref{E:N>I-L} converges provided that
\[
\int_{0_+}t^{\frac{\alpha}{2}-1+\frac{n}{2p}-\frac{n}{2}} \ud t <\infty,
\]
where we have used the fact that  
\[
 pS_i+T_i\ge |h_i|^2 p e^{-2t}\quad\text{and}\quad T_i=2|h_i|^2 (t + O(t^2)).
\]
Therefore, $\alpha+n/p>n$.
This completes the proof of Lemma \ref{T:mun}.
\end{proof}

\subsection{Local time part (Proof of Theorem \ref{T:fL})}\label{SS:fL}

\begin{proof}[Proof of Theorem \ref{T:fL}]
This is a slight extension of \cite[Theorem 1]{AiraultRenZhang00}.
The proof consists three steps.
Let $L^x_t$ be the local time of the standard one-dimensional Brownian motion. 
 
{\bigskip\noindent\bf Step 1.~}
Fix $\epsilon>0$. 
Denote 
\[
F_{\epsilon,x}(y)=
\begin{cases}
 1& \text{if $y>x+\epsilon$,}\\
 (y-x+\epsilon)/(2\epsilon) & \text{if $|y-x|\le \epsilon$,}\\
 0 &\text{if $y<x-\epsilon$.}
\end{cases}
\]
Define
\[
N_\epsilon(x,t)= \int_0^t F_{\epsilon,x}(B_s)\ud B_s\quad\text{and}\quad
N(x,t)=\int_0^t \Indt{B_s>x}\ud B_s.
\]
By Tanaka's formula, 
\[
L_t^x= (B_t-x)^+ -(-x)^+ -N(x,t).
\]
Notice that $L(x,t)\in [0,t]$ is a bounded random variable.
On the other hand, let 
\[
L_{\epsilon,t}^x :=(B_t-x)^+ -(-x)^+ -N_\epsilon(x,t).
\]
Notice that
\[
N(x+\epsilon,t)\le N_\epsilon(x,t)\le N(x-\epsilon,t),
\]
Using the fact that for all $x\in\R$ and $y\ge 0$
\[
x^+\le (x+y)^+\le x^++y\quad\text{and}\quad
x^+\ge (x-y)^+\ge x^+-y,
\]
we see that
\begin{align*}
L_{\epsilon,t}^x
&\le (B_t-(x+\epsilon) + \epsilon)^+-(-(x+\epsilon) +\epsilon )^+ -N(x+\epsilon,t)\le L_t^{x+\epsilon} + \epsilon,
\end{align*}
and 
\begin{align*}
L_{\epsilon,t}^x
&\ge (B_t-(x-\epsilon) - \epsilon)^+-(-(x-\epsilon) -\epsilon )^+ -N(x-\epsilon,t)\ge L_t^{x- \epsilon} - \epsilon.
\end{align*}
Hence, it holds that
\begin{align}\label{E:Leps}
L_t^{x- \epsilon} - \epsilon \le L_{\epsilon,t}^x \le
L_t^{x+\epsilon} + \epsilon.
\end{align}

{\bigskip\noindent\bf Step 2.~}
Now assume that $\epsilon\in (-1,1)$.
By telescoping sum,
\begin{align}\notag
\left|f(z_1,\dots,z_m)-f(z_1',\dots,z_m')\right|\le &\quad
  \left|f(z_1,z_2,\dots,z_m)-f(z_1',z_2,\dots,z_m)\right| \\
  \notag
&+\left|f(z_1',z_2,z_3,\dots,z_m)-f(z_1',z_2',z_3,\dots,z_m)\right| \\
\notag
&+\dots\\
\label{E:Tele}
&+\left|f(z_1',\dots,z_{n-1}',z_m)-f(z_1',\dots,z_{n-1}',z_m')\right|.
\end{align}
Because $\frac{\partial^2}{\partial x_1^2} f\ge 0$, the first term to the right of \eqref{E:Tele}
satisfies that
\[
\left|f(z_1,z_2,\dots,z_m)-f(z_1',z_2,\dots,z_m)\right|
\le \left(|f_1(z_1,z_2,\dots,z_m)|+|f_1(z_1',z_2,\dots,z_m)|\right) \: |z_1-z_1'|.
\]
Now replace $z_i$ and $z_i'$ by $L^i:=L^{i, x_i}_t $ and $L^i_\epsilon:=L^{i, x_i}_ {\epsilon,t}$, $i=1,\dots, m$, respectively.
Denote the quantity in \eqref{E:Ct} by
\[
C_{p}:=C_p(t,x_1,\dots,x_m).
\]
Let $1/r+1/q=1$, $q\ge 2$. By H\"older's inequality,
the first term on the right hand side of \eqref{E:Tele} satisfies that
\begin{align*}
&\hspace{-3em}\Norm{f(L^1,L^2,\dots,L^m)-f(L^1_\epsilon,L^2,\dots,L^m) }_p\\
&\le \Norm{ |f_1(L^1,L^2,\dots,L^m)|+|f_1(L^1_\epsilon,L^2,\dots,L^m)| }_{pq} \: \Norm{L^{1,x_1}_t-L^{1,x_1}_{\epsilon ,t}  }_{pr}\\
&\le 2\sup_{\epsilon_1\in (-1,1)}\Norm{ f_1(L^{1,x_1+\epsilon_1}_t +\epsilon_1,L^2,\dots,L^m) }_{pq} \: \Norm{L^{1,x_1}_t -L_{\epsilon,t} ^{1,x_1}  } _{pr}\\
&\le 2\:  C_{pq} \Norm{L^{x_1}_t -L_{\epsilon,t} ^{x_1} }_{pr},
\end{align*}
where we have used the fact \eqref{E:Leps}.
This inequality is true for all the $n$ terms on the right hand side of \eqref{E:Tele}
under the same replacements.
Therefore, 
\begin{align}\label{E:(2.8)}
\Norm{f(L^1,\dots,L^m)-f(L^1_\epsilon,\dots,L^m_\epsilon) }_p
\le 2\: C_{pq} \sum_{i=1}^m  \Norm{L^{x_i}_t -L^{x_i} _{\epsilon,t}  }_{pr}
\le 2 m \widetilde{C}_{pq} \epsilon^{1/2},
\end{align}
where the last inequality is due to (2.8) of \cite{AiraultRenZhang00}.

By H\"older's inequality with $1/r+1/q=1$, we see that
\begin{align*}
\E\left[\Norm{Df(L_\epsilon^1,\dots,L_\epsilon^m)}_H^p\right]&
\le 2^{p-1}
\sum_{i=1}^m
\E\left(|f_i(L_\epsilon^1,\dots,L_\epsilon^m)|^p \Norm{D L_{\epsilon,t}^{i,x}  }_H^p\right)\\
&\le
2^{p-1}
\sum_{i=1}^m
\E\left(|f_i(L_\epsilon^1,\dots,L_\epsilon^m)|^{pq}\right)^{1/q} \E\left(\Norm{D L_{\epsilon,t}^{i,x} }_H^{pr}\right)^{1/r}\\
&\le 
2^{p-1}
\:C_{pq}^p \sum_{i=1}^m\Norm{D  L_{\epsilon,t}^{i,x}  }_{L^{pr}(\Omega;H)}^{p},
\end{align*}
where we have applied \eqref{E:Leps} and \eqref{E:Ct} as before.
Hence, by (2.9) of \cite{AiraultRenZhang00},
\begin{align}\notag
\Norm{Df(L_\epsilon^1,\dots,L_\epsilon^m}_{L^{p}(\Omega;H)}
&\le 2 \: m C_{pq}
\sum_{i=1}^m
\left(\Norm{D (B_t-x_i)^+}_{L^{pr}(\Omega;H)}+\Norm{D N_\epsilon(x_i,t)}_{L^{pr}(\Omega;H)}\right)\\
&\le
2 \: \widetilde{C}_{pq} \: \epsilon^{-1+\frac{1}{2q'}}\quad\text{for all $q'>1$.}
\label{E:(2.9)}
\end{align}
Therefore, by the same arguments as the proof of Theorem 1 in \cite{AiraultRenZhang00},
we see that \eqref{E:(2.8)} and \eqref{E:(2.9)} imply that
$f(L^1,\dots,L^m)\in \D^{\alpha,p}(\R)$ for all $p>1$ and $\alpha<1/2$.

{\bigskip\noindent\bf Step 3.~}
In this final step, we need to verify that $f(z_1,\dots,z_m)=\exp\left(\lambda^2 \sum_{j=1}^m z_j\right)$
satisfies the condition \eqref{E:Ct}.
By H\"older's inequality, we have that, for all $\epsilon_j\in(-1,1)$,
\begin{align*}
\Norm{f_i(L^{1,x_1+\epsilon_1} _t +\epsilon_1,\dots,L^{m,x_m+\epsilon_m} _t+\epsilon_m)}_p&=
\lambda^2\Norm{\exp\left(\lambda^2 \sum_{j=1}^m [L^{j,x_j+\epsilon_j} _t+\epsilon_j]\right)}_p\\
&\le
\lambda^2\prod_{j=1}^m
\Norm{\exp\left(\lambda^2 [L^{j,x_j+\epsilon_j} _t+\epsilon_j]\right)}_{mp}\\
&\le
\lambda^2 e^{\lambda^2 m}\prod_{j=1}^m
\Norm{\exp\left(\lambda^2 L^{j,x_j+\epsilon_j} _t\right)}_{mp}.
\end{align*}
By Corollary \ref{C:ExpL},
\[
\Norm{\exp\left(\lambda^2 L^{ x_j+\epsilon_j} _t  \right)}_{mp} =
\E\left[\exp\left(\lambda^2 mp L^{ x_j+\epsilon_j} _t \right) \right]^{\frac{1}{mp}}
\le \left[2e^{\lambda^4 m^2p^2 t/2}+1\right]^{\frac{1}{mp}}.
\]
Therefore,
\begin{multline*}
\max_{1\le i\le m}\mathop{\sup_{\epsilon_j \in(-1,1)}}_{1\le j\le m}\Norm{f_i(L^{ 1,x_1+\epsilon_1} _t +\epsilon_1,\dots,L^{ m,x_m+\epsilon_m} _t+\epsilon_m)}_p
\le \lambda^2 e^{ \lambda^2 m}
\left[2e^{\lambda^4 m^2p^2 t/2}+1\right]^{\frac{1}{p}}.
\end{multline*}
This completes the proof of Theorem \ref{T:fL}.
\end{proof}

\addcontentsline{toc}{section}{Bibliography}
\def\polhk#1{\setbox0=\hbox{#1}{\ooalign{\hidewidth
  \lower1.5ex\hbox{`}\hidewidth\crcr\unhbox0}}} \def\cprime{$'$}

\end{document}